\tikzset{
every picture/.style=thick,
bluenode/.style={circle, draw=black, fill=blue!40, very thick, minimum size=3mm,inner sep=1mm},
whitenode/.style={circle, draw=black, fill=black!10, very thick, minimum size=3mm,inner sep=1mm},
squarednode/.style={rectangle, draw=red!60, fill=red!5, very thick, minimum size=3mm},
every loop/.style={min distance=8mm} 
}
\newtheorem{theorem}{Theorem}[section]
\newtheorem{lemma}[theorem]{Lemma}
\newtheorem{proposition}[theorem]{Proposition}
\newtheorem{corollary}[theorem]{Corollary}
\newtheorem{definition}[theorem]{Definition}
\newtheorem{observation}[theorem]{Observation}
\newtheorem{remark}[theorem]{Remark}
\newtheorem{example}[theorem]{Example}
\newtheorem{question}[theorem]{Question}
\newenvironment{thm}{\begin{theorem}}{\end{theorem}}
\newenvironment{lem}{\begin{lemma}}{\end{lemma}}
\newenvironment{prop}{\begin{proposition}}{\end{proposition}}
\newenvironment{cor}{\begin{corollary}}{\end{corollary}}
\newenvironment{defn}{\begin{definition}\bgroup\rm }{\egroup\end{definition}}
\newenvironment{obs}{\begin{observation}\bgroup\rm }{\egroup\end{observation}}
\newenvironment{rem}{\begin{remark}\bgroup\rm }{\egroup\end{remark}}
\newenvironment{ex}{\begin{example}\bgroup\rm }{\egroup\end{example}}
\def \nul {\operatorname{null}}
\def \S {\mathcal{S}}
\def \lg#1#2#3#4{\phi_{#4}(#1,#2,#3)} 
\def \lset#1#2#3{\phi_{#3}(#1,#2)}
\def \ft#1#2#3{(#1:#2\rightarrow #3)}
\def \ol#1{\overline{#1}}
\def \Zsap{Z_{\mathrm{SAP}}}
\def \Zvc{Z_{\mathrm{vc}}}
\def \ZFloor{\lfloor Z\rfloor}
\def \dunion{\dot\cup}
\def \ccl{\ol{cl}}
\begin{document}

\title{Using a new zero forcing process to guarantee the Strong Arnold Property}

\author{
        Jephian C.-H. Lin\footnotemark[2]
        }

\date{\today}

\maketitle

\renewcommand{\thefootnote}{\fnsymbol{footnote}}
\footnotetext[2]{        
    Department of Mathematics, Iowa State University, 
    Ames, IA 50011, USA (chlin@iastate.edu).
		}
        
\renewcommand{\thefootnote}{\arabic{footnote}}

\begin{abstract}
The maximum nullity $M(G)$ and the Colin de Verdi\`ere type parameter $\xi(G)$ both consider the largest possible nullity over matrices in $\S(G)$, which is the family of real symmetric matrices whose $i,j$-entry, $i\neq j$, is nonzero if $i$ is adjacent to $j$, and zero otherwise;  however, $\xi(G)$ restricts to those matrices $A$ in $\S(G)$ with the Strong Arnold Property, which means $X=O$ is the only symmetric matrix that satisfies $A\circ X=O$, $I\circ X=O$, and $AX=O$.  This paper introduces zero forcing parameters $\Zsap(G)$ and $\Zvc(G)$, and proves that $\Zsap(G)=0$ implies every matrix $A\in \S(G)$ has the Strong Arnold Property and that the inequality $M(G)-\Zvc(G)\leq \xi(G)$ holds for every graph $G$.  Finally, the values of $\xi(G)$ are computed for all graphs up to $7$ vertices, establishing  $\xi(G)=\ZFloor(G)$ for these graphs.
\end{abstract}

\noindent{\bf Keywords:} 
Strong Arnold Property, SAP zero forcing, minimum rank, maximum nullity, Colin de Verdi\`ere type parameter, vertex cover.
\medskip 

\noindent{\bf AMS subject classifications:}
05C50, 
05C57, 
05C83, 
15A03, 
15A18, 
15A29. 

\section{Introduction}
A \textit{minimum rank problem} for a graph $G$ is to determine what is the smallest possible rank, or equivalently the largest possible nullity, among a family of matrices associated with $G$.  One classical way to associate matrices to a graph $G$ is through $\S(G)$, which is defined as the set of all real symmetric matrices whose $i,j$-entry, $i\neq j$, is nonzero whenever $i$ and $j$ are adjacent in $G$, and zero otherwise.  Note that the diagonal entries can be any real number.  Another association is $\S_+(G)$, which is the set of positive semidefinite matrices in $\S(G)$.  Thus, the \textit{maximum nullity} $M(G)$ and the \textit{positive semidefinite maximum nullity} $M_+(G)$ are defined as 
\begin{align*}
M(G)&=\max\{\nul(A):A\in\S(G)\}, \text{ and}\\
M_+(G)&=\max\{\nul(A):A\in\S_+(G)\}.\\
\end{align*}

The classical minimum rank problem is a branch of the \textit{inverse eigenvalue problem}, which asks for a given multi-set of real numbers, is there a matrix in $\S(G)$ such that its spectrum is composed of these real numbers.  If $\lambda$ is an eigenvalue of some matrix $A\in \S(G)$, then its multiplicity should be no higher than $M(G)$, for otherwise $A-\lambda I$ has nullity higher than $M(G)$.  Similarly, $M_+(G)$ provides an upper bound for the multiplicities of the smallest and the largest eigenvalues.  Also, $M_+(G)$ is closely related to faithful \textit{orthogonal representations} \cite{HLA46}.

Other families of matrices are defined through the Strong Arnold Property.  A matrix $A$ is said to have the \textit{Strong Arnold Property} (or SAP) if the zero matrix is the only symmetric matrix $X$ that satisfies the three conditions $A\circ X=O$, $I\circ X=O$, and $AX=O$.  Here $I$ and $O$ are the identity matrix and the zero matrix of the same size as $A$, respectively, and $\circ$ is the Hadamard (entrywise) product of matrices.  By adding the SAP to the conditions of the abovementioned families, the \textit{Colin de Verdi\`ere type parameters} are defined as 
\begin{align*}
\xi(G)&=\max\{\nul(A):A\in\S(G), A\text{ has the SAP}\}\text{ \cite{xi}, and}\\
\nu(G)&=\max\{\nul(A):A\in\S_+(G), A\text{ has the SAP}\}\text{ \cite{CdV2}.}
\end{align*}

These parameters are variations of the original Colin de Verdi\`ere parameter $\mu(G)$ \cite{CdV}, which is defined as the maximum nullity over matrices $A\in\S(G)$ such that 
\begin{itemize}
\item every off-diagonal entry of $A$ is non-positive (called a \textit{generalized Laplacian}),
\item $A$ has exactly one negative eigenvalue including the  multiplicity, and 
\item $A$ has the SAP.
\end{itemize}
In order to see how the SAP makes a difference between these parameters, we define $M_\mu(G)$ as the maximum nullity of the same family of matrices by ignoring the SAP, i.e.~the maximum nullity of matrices $A\in \S(G)$ such that $A$ is a generalized Laplacian and has exactly one negative eigenvalue.

The SAP gives $\xi(G)$, $\nu(G)$, and $\mu(G)$ nice properties.  For example, they are \textit{minor monotone} \cite{HLA46}.  A graph $H$ is a \textit{minor} of a graph $G$ if $H$ can be obtained from $G$ by a sequence of deleting edges, deleting vertices, and contracting edges; a graph parameter $\zeta$ is said to be minor monotone if $\zeta(H)\leq\zeta(G)$ whenever $H$ is a minor of $G$.  By the graph minor theorem (e.g.,~see \cite{DiestelGT}), for a given integer $d$ and a minor monotone parameter $\zeta$, the minimal forbidden minors for $\zeta(G)\leq d$ consist of only finitely many graphs.  Here $\zeta$ can be $\xi$, $\nu$ or $\mu$.  More specifically, $\mu(G)\leq 3$ if and only if $G$ is a planar graph \cite{KLV}, which is characterized by the forbidden minors $K_5$ and $K_{3,3}$.  

However, the SAP also makes the Colin de Verdi\`ere type parameters less controllable by the existing tools.  For example, zero forcing parameters, which will be defined in Section \ref{subsec:zeroforcing}, were used extensively as a bound for the minimum rank problem.  For the classical zero forcing number $Z(G)$, it is known that $M(G)\leq Z(G)$ for all graphs \cite{AIM}; and $M(G)=Z(G)$ when $G$ is a tree or $|G|\leq 7$ \cite{AIM,small}.  An analogy for $\xi(G)$ is the minor monotone floor of the zero forcing number, which is denoted as $\ZFloor(G)$ and will be defined in Section \ref{sec:xi}.  It is known that $\xi(G)\leq \ZFloor(G)$ for all graphs \cite{param}.  The similar statement $\xi(G)=\ZFloor(G)$ is not always true when $G$ is a tree \cite{param}, and no results about $\xi(G)$ and $\ZFloor(G)$ for small graphs are known. 

The main goal of this paper is to establish a connection between zero forcing parameters and the SAP, and derive consequences.  This leads to some questions.  Does some graph structure guarantee that every $A\in \S(G)$ has the SAP?  Thus, the maximum nullity does not change when the SAP condition is added.  Or, is there a strategy to perturb any given matrix such that it guarantees the SAP?  Thus, the rank changed by the perturbation gives an upper bound for $M(G)-\xi(G)$.

In Section \ref{sec:Zsap}, we introduce a new parameter $\Zsap(G)$ and its variants $\Zsap^\ell$ and $\Zsap^+$, and prove in Theorem \ref{thm:Zsap} that under the condition $\Zsap(G)=0$,  every matrix $A\in \S(G)$ has the SAP.  Thus, $\xi(G)=M(G)$, $\nu(G)=M_+(G)$, and $\mu(G)=M_\mu(G)$ when $\Zsap(G)=0$, so finding the value of Colin de Verdi\`ere type parameters is equivalent to finding the value of the corresponding parameters.  Table \ref{tbl:Zsap} in Section \ref{sec:simulation} indicates that there are actually a considerable proportion of graphs that have this property.

In Section \ref{sec:Zvc}, another parameter $\Zvc(G)$ and its variant $\Zvc^\ell(G)$ are defined, and Theorem \ref{thm:Zvc} states that $M(G)-\xi(G)\leq \Zvc(G)$ for every graph $G$.  With the help of $\Zsap(G)$, $\Zvc(G)$, and some existing theorems, Section \ref{sec:xi} provides the result that $\xi(G)=\ZFloor(G)$ for graphs $G$ up to $7$ vertices.

All parameters introduced in this paper and their relations are illustrated in Figure \ref{fig:param}.  A brief description of the related theorems are given on the sides.  A line between two parameters means the lower one is less than or equal to the upper one.

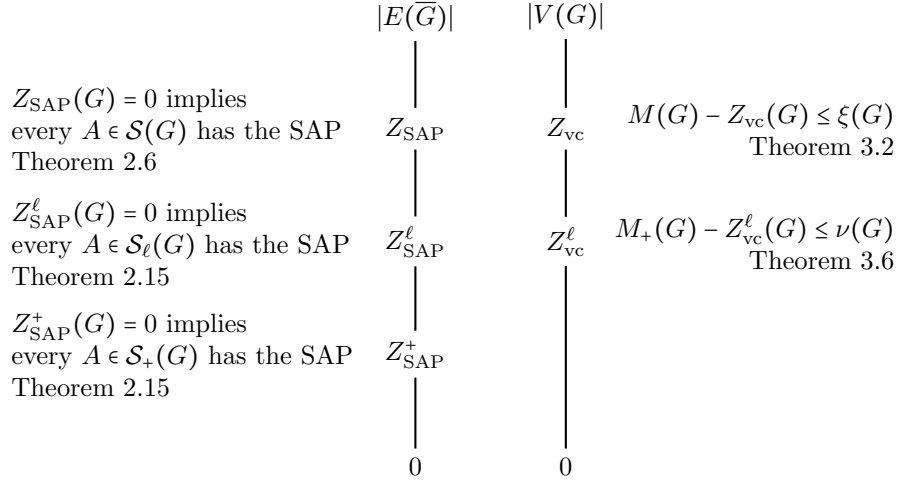
\begin{figure}[h]
\begin{center}\begin{tikzpicture}
\foreach \i in {0,1}{
\pgfmathsetmacro{\x}{2*(\i-0.5)}
    \foreach \j in {1,...,5}{
    \pgfmathsetmacro{\y}{-1.5*\j}
    \path (\x,\y) coordinate (c\i\j);
    }
}
\node (v01) at (c01) {$|E(\ol{G})|$};
\node (v02) at (c02) {$\Zsap$};
\node (v03) at (c03) {$\Zsap^\ell$};
\node (v04) at (c04) {$\Zsap^+$};
\node (v05) at (c05) {$0$};
\draw (v01) -- (v02) -- (v03) -- (v04) -- (v05);

\node (v11) at (c11) {$|V(G)|$};
\node (v12) at (c12) {$\Zvc$};
\node (v13) at (c13) {$\Zvc^\ell$};
\node (v14) at (c14) {};
\node (v15) at (c15) {$0$};
\draw (v11) -- (v12) -- (v13) --(v15);

\begin{scope}[every node/.style={align=left, xshift=-5.5cm,right}]
\node at (c02) {$\Zsap(G)=0$ implies\\ every $A\in \S(G)$ has the SAP\\ Theorem \ref{thm:Zsap}};
\node at (c03) {$\Zsap^\ell(G)=0$ implies\\ every $A\in \S_\ell(G)$ has the SAP\\ Theorem \ref{thm:Zsapl+}};
\node at (c04) {$\Zsap^+(G)=0$ implies\\ every $A\in \S_+(G)$ has the SAP\\ Theorem \ref{thm:Zsapl+}};
\end{scope}

\begin{scope}[every node/.style={align=right, xshift=4.5cm, left}]
\node at (c12) {$M(G)-\Zvc(G)\leq \xi(G)$\\ Theorem \ref{thm:Zvc}};
\node at (c13) {$M_+(G)-\Zvc^\ell(G)\leq \nu(G)$\\ Theorem \ref{thm:Zvcl}};
\end{scope}

\end{tikzpicture}\end{center}
\caption{Parameters introduced in this paper.}
\label{fig:param}
\end{figure}

  Throughout the paper, the neighborhood of a vertex $i$ in a graph $G$ is denoted as $N_G(i)$, while the closed neighborhood is denoted as $N_G[i]$, which equals $N_G(i)\cup \{i\}$.  The induced subgraph on a vertex set $W$ of $G$ is denoted as $G[W]$.  If $A$ is a matrix, $U$ and $W$ are subsets of the row and column indices of $A$ respectively, then $A[U,W]$ is the submatrix of $A$ induced on the rows of $U$ and columns of $W$; if $U$ and $W$ are ordered sets, then permute the rows and columns of this submatrix accordingly.
  
\subsection{SAP system and its matrix representation}
Let $G$ be a graph on $n$ vertices, and $\ol{m}=|E(\ol{G})|$.  In order to see if a matrix $A\in \S(G)$ has the SAP or not, the matrix $X$ can be viewed as a symmetric matrix with $\ol{m}$ variables at the positions of non-edges so that $X$ satisfies $A\circ X=I\circ X=O$.  Next, $AX=O$ leads to $n^2$ restrictions on the $\ol{m}$ variables, which forms a linear system.  
Call this linear system the \textit{SAP system of $A$}, which can also be written as an $n^2\times \ol{m}$ matrix.

\begin{defn}
\label{defn:system}
Let $G$ be a graph on $n$ vertices, $\ol{m}=|E(\ol{G})|$, and $A=\begin{bmatrix}a_{i,j}\end{bmatrix}\in \S(G)$.  Given an order of the set of non-edges, the \textit{SAP matrix} of $A$ with respect to this order is an $n^2\times \ol{m}$ matrix $\Psi{}$ whose rows are indexed by pairs $(i,k)$ and columns are indexed by the non-edges $\{j,h\}$ such that 
\[\Psi{}_{(i,k),\{j,h\}}=\left\{\begin{array}{ll}
0 & \text{if }k\notin\{j,h\},\\
a_{i,j} & \text{if }k\in\{j,h\} \text{ and }k=h.\\
\end{array}\right.\]
The rows follow the order $(i,k)< (j,h)$ if and only if $k< h$, or $k=h$ and $i< j$;  the columns follow the order of the non-edges.  
\end{defn}

\begin{rem}
\label{rem:system}
Let $G$ be a graph, $A\in \S(G)$, and $\Psi$ the SAP matrix of $A$ with respect a given order of the non-edges.  The columns of $\Psi$ correspond to the $\ol{m}$ variables in $X$, and the row for $(i,j)$ represents the equation $(AX)_{i,j}=0$.  Therefore, a matrix has the SAP if and only if the corresponding SAP matrix is full-rank.  

The rows of $\Psi{}$ can be partitioned into $n$ blocks, each having $n$ elements.  The $k$-th block are those rows indexed by $(i,k)$ for $1\leq i\leq n$.  Let ${\bf v}_j$ be the $j$-th column of $A$.  For the submatrix of $\Psi{}$ induced by the rows in the $k$-th block, the $\{j,h\}$ column is ${\bf v}_j$ if $k\in \{j,h\}$ and $k=h$, and is a zero vector otherwise.  Equivalently, on the $\{i,j\}$ column of $\Psi{}$, the $i$-th block is ${\bf v}_j$, the $j$-th block is ${\bf v}_i$, while other blocks are zero vectors.
\end{rem}

\begin{ex}
\label{ex:systems}
Let $G=P_4$ be the path on four vertices, labeled by the linear order.  Consider a matrix $A\in\S(G)$ and the matrix $X$ with three variables, as shown below.  
\[AX=\kbordermatrix{&1&2&3&4\\
1&-1 & 1 & 0 & 0\\
2&1 & -1 & 1 & 0\\
3&0 & 1 & -1 & 1\\
4&0 & 0 & 1  & -1\\
}\kbordermatrix{&1&2&3&4\\
&0 & 0 & x_{\{1,3\}} & x_{\{1,4\}}\\
&0 & 0 & 0 & x_{\{2,4\}}\\
&x_{\{1,3\}} & 0 & 0 & 0\\
&x_{\{1,4\}} & x_{\{2,4\}} & 0  & 0\\
}\]

The SAP matrix of $A$ with respect to the order $(\{1,3\},\{1,4\},\{2,4\})$ is a matrix $\Psi$ representing the linear system for $AX=O$ with three variables $x_{\{1,3\}},x_{\{1,4\}},x_{\{2,4\}}$.  
For convenience, write $A=\begin{bmatrix}{\bf v}_1&{\bf v}_2&{\bf v}_3&{\bf v}_4\end{bmatrix}$, where ${\bf v}_j$ is the $j$-th column vector of $A$.  Now $AX=O$ means 
\[\sum_{j\notin N_G[k]}x_{\{j,k\}}{\bf v}_j={\bf 0}\text{ for each }k\in V(G).\]
Thus,  
\[
\Psi =
\kbordermatrix{
& x_{\{1,3\}} &x_{\{1,4\}}&x_{\{2,4\}}\\
1&{\bf v}_3 & {\bf v}_4& {\bf 0}\\
2&{\bf 0}   & {\bf 0}  & {\bf v}_4\\
3&{\bf v}_1 & {\bf 0}  & {\bf 0}\\
4&{\bf 0}   & {\bf v}_1& {\bf v}_2\\
}=
\kbordermatrix{
& x_{\{1,3\}} &x_{\{1,4\}}&x_{\{2,4\}}\\
(1,1) &0 & 0 & 0 \\
(2,1) &1 & 0 & 0 \\
(3,1) &-1 & 1 & 0 \\
(4,1) &1 & -1 & 0 \\
(1,2) &0 & 0 & 0 \\
(2,2) &0 & 0 & 0 \\
(3,2) &0 & 0 & 1 \\
(4,2) &0 & 0 & -1 \\
(1,3) &-1 & 0 & 0 \\
(2,3) &1 & 0 & 0 \\
(3,3) &0 & 0 & 0 \\
(4,3) &0 & 0 & 0 \\
(1,4) &0 & -1 & 1 \\
(2,4) &0 & 1 & -1 \\
(3,4) &0 & 0 & 1 \\
(4,4) &0 & 0 & 0
}.
\]
\end{ex} 

\subsection{Zero forcing parameters}
\label{subsec:zeroforcing}

On a graph $G$, the conventional \textit{zero forcing game} (ZFG) is a color-change game such that each vertex is colored blue or white initially, and then the \textit{color change rule} (CCR) is applied repeatedly.  If starting with an initial blue set $B\subseteq V(G)$ and every vertex turns blue eventually, this set $B$ is called a \textit{zero forcing set} (ZFS).  The zero forcing number is defined as the minimum cardinality of a ZFS.

Different types of zero forcing numbers are discussed in the literature (e.g.,~see \cite{HLA46,smallparam,param}).  
Most of them serve as upper bounds of different types of maximum nullities.  Here we consider three types of the zero forcing numbers $Z$, $Z_\ell$, $Z_+$ with the corresponding color change rules:
\begin{itemize}
\item  (CCR-$Z$) If $i$ is a blue vertex and $j$ is the only white neighbor of $i$, then $j$ turns blue.
\item  (CCR-$Z_\ell$) CCR-$Z$ can be used to perform a force.  Or if $i$ is a white vertex without white neighbors and $i$ is not isolated, then $i$ turns blue.
\item  (CCR-$Z_+$) Let $B$ be the set of blue vertices at some stage and $W$ the vertices of a component of $G-B$.  CCR-$Z$ is applied to $G[B\cup W]$ with blue vertices $B$.
\end{itemize}
When a zero forcing game is mentioned, it is equipped with a color change rule, and we use $i\rightarrow j$ to denote a corresponding force (i.e.~$i$ forcing $j$ to become blue).  Note that for CCR-$Z_\ell$, it is possible to have $i\rightarrow i$.

It is known \cite{AIM,smallparam,param} that $M(G)\leq Z(G)$, $M_+(G)\leq Z_+(G)$, and $Z_+(G)\leq Z_\ell(G)\leq Z(G)$.  Denote $\S_\ell(G)$ as those matrices in $\S(G)$ whose $i,i$-entry is zero if and only if vertex $i$ is an isolated vertex.  Then every matrix $A\in \S_\ell(G)$ has nullity at most $Z_\ell(G)$ \cite{cancun}.  

All these results rely on Proposition \ref{prop:forcing}.

\begin{prop}{\rm \cite{AIM,smallparam,cancun}}
\label{prop:forcing}
Let $G$ be a graph on $n$ vertices.  Suppose at some stage $B$ is the set of blue vertices.  
\begin{itemize}
\item If $i\rightarrow j$ under CCR-$Z$, then for any matrix $A\in \S(G)$ with column vectors $\{{\bf v}_s\}_{s=1}^n$, $\sum_{s\notin B} x_s{\bf v}_s={\bf 0}$ implies $x_j=0$.  
\item If $i\rightarrow j$ under CCR-$Z_\ell$, then for any matrix $A\in\S_\ell(G)$ with column vectors $\{{\bf v}_s\}_{s=1}^n$, $\sum_{s\notin B} x_s{\bf v}_s={\bf 0}$ implies $x_j=0$.
\item If $i\rightarrow j$ under CCR-$Z_+$, then for any matrix $A\in\S_+(G)$ with column vectors $\{{\bf v}_s\}_{s=1}^n$, $\sum_{s\notin B} x_s{\bf v}_s={\bf 0}$ implies $x_j=0$.
\end{itemize}
\end{prop}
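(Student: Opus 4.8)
The plan is to verify the three bullets separately; the first two are short readings of a single coordinate of the column relation, and the third reduces to the first after a positive‑semidefinite localization. For the CCR-$Z$ bullet, the hypothesis $i\rightarrow j$ says $i\in B$ and $N_G(i)\setminus\{j\}\subseteq B$. I will take the $i$-th coordinate of $\sum_{s\notin B}x_s{\bf v}_s={\bf 0}$: the $i$-th entry of ${\bf v}_s$ is $a_{i,s}$, which vanishes unless $s\in N_G[i]$; the term $s=i$ is absent because $i\in B$, and among the neighbors of $i$ only $j$ lies outside $B$. Hence the coordinate reduces to $x_ja_{i,j}=0$, and $a_{i,j}\neq0$ since $i\sim j$, so $x_j=0$. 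For the CCR-$Z_\ell$ bullet, a force is either a CCR-$Z$ force---covered by the previous sentence, since $\S_\ell(G)\subseteq\S(G)$---or a lonely force $i\rightarrow i$ with $i\notin B$ and $\emptyset\neq N_G(i)\subseteq B$; in the latter case the same coordinate computation leaves only the $s=i$ term, i.e.\ $x_ia_{i,i}=0$, and since $i$ is not isolated the definition of $\S_\ell(G)$ gives $a_{i,i}\neq0$, so $x_i=0$.

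For the CCR-$Z_+$ bullet, the force $i\rightarrow j$ is a CCR-$Z$ force performed inside $G[B\cup W]$ for some component $W$ of $G-B$, reached after a finite sequence of CCR-$Z$ forces in $G[B\cup W]$ started from the blue set $B$; note $j\in W$. Set ${\bf u}$ to be the vector with $u_s=x_s$ for $s\notin B$ and $u_s=0$ otherwise, so $A{\bf u}={\bf 0}$, and write ${\bf u}=\sum_k{\bf u}^{(k)}$ according to the components $W_k$ of $G-B$. Distinct components of $G-B$ share no edge of $G$, so $a_{p,q}=0$ when $p\in W_k$, $q\in W_l$, $k\neq l$; therefore the cross terms $({\bf u}^{(k)})\mt A\,{\bf u}^{(l)}$ vanish and $0={\bf u}\mt A{\bf u}=\sum_k({\bf u}^{(k)})\mt A\,{\bf u}^{(k)}$. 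Each summand is nonnegative because $A\in\S_+(G)$, hence each is $0$, and for a positive semidefinite matrix $({\bf u}^{(k)})\mt A\,{\bf u}^{(k)}=0$ forces $A\,{\bf u}^{(k)}={\bf 0}$. Taking the component $W$ and restricting $A\,{\bf u}^{(W)}={\bf 0}$ to the coordinates in $B\cup W$ yields $\sum_{s\in W}x_s{\bf v}'_s={\bf 0}$, where ${\bf v}'_s$ denotes the $s$-th column of $A':=A[B\cup W,B\cup W]\in\S(G[B\cup W])$. This is precisely the hypothesis of the first bullet for the triple $(G[B\cup W],A',B)$, so, walking along the chain of CCR-$Z$ forces in $G[B\cup W]$ that ends at $i\rightarrow j$ and invoking the first bullet at each step with the successively enlarged blue set (each a legitimate stage of the game on $G[B\cup W]$), one forces the corresponding variable to $0$ at every step, in particular $x_j=0$.

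The step I expect to be the crux is the localization in the CCR-$Z_+$ bullet, namely the passage from $A{\bf u}={\bf 0}$ to $A\,{\bf u}^{(W)}={\bf 0}$: this is exactly where positive semidefiniteness is used, and the statement genuinely fails at this point for a general $A\in\S(G)$. Everything else is bookkeeping with the column relation coordinatewise.
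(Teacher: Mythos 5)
The paper does not prove Proposition~\ref{prop:forcing} itself---it is quoted from \cite{AIM,smallparam,cancun}---so there is no internal proof to compare against; judged on its own, your argument is correct and is essentially the standard one behind those references: reading off the $i$-th coordinate of $\sum_{s\notin B}x_s{\bf v}_s={\bf 0}$ for CCR-$Z$ and CCR-$Z_\ell$ (with $a_{i,i}\neq 0$ supplied by the definition of $\S_\ell(G)$ in the lonely-force case), and, for CCR-$Z_+$, splitting the kernel vector over the components of $G-B$, observing that the cross terms of the quadratic form vanish, using that ${\bf z}\mt A{\bf z}=0$ implies $A{\bf z}={\bf 0}$ for positive semidefinite $A$, and then invoking the first bullet inside $G[B\cup W]$ for $A[B\cup W,B\cup W]\in\S_+(G[B\cup W])$. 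Your reading of CCR-$Z_+$ as permitting a chain of $Z$-forces inside $G[B\cup W]$ is broader than the single-force rule stated in Section~\ref{subsec:zeroforcing}, but it subsumes it (the earlier forced variables vanish, so the reduced relation holds at each enlarged blue set), so nothing is lost.
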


\section{SAP zero forcing parameters}
\label{sec:Zsap}

In this section, we introduce a new parameter $\Zsap(G)$ and prove that if $\Zsap(G)=0$ then every matrix $A\in\S(G)$ has the SAP, which implies $M(G)=\xi(G)$.  We also introduce similar parameters and results for other variants.  

First we give two examples illustrating what we called in Definition \ref{defn:Zsap} the forcing triple and the odd cycle rule.

\begin{ex}
\label{ex:P4}
Consider the graph $P_4$.  Let $A$ be the matrix as in Example \ref{ex:systems} and ${\bf v}_j$ its $j$-th column.  In Example \ref{ex:systems}, we know the SAP matrix of $A$ can be written as 
\[
\kbordermatrix{
& x_{\{1,3\}} &x_{\{1,4\}}&x_{\{2,4\}}\\
1&{\bf v}_3 & {\bf v}_4& {\bf 0}\\
2&{\bf 0}   & {\bf 0}  & {\bf v}_4\\
3&{\bf v}_1 & {\bf 0}  & {\bf 0}\\
4&{\bf 0}   & {\bf v}_1& {\bf v}_2\\
}.\]
Since ${\bf v}_4$ is the only nonzero vector on the second block-row, $x_{\{2,4\}}$ must be $0$ in this linear system.  Similarly, ${\bf v}_1$ is the only nonzero vector on the third block-row, so $x_{\{1,3\}}=0$.  Provided that $x_{\{1,3\}}=x_{\{2,4\}}=0$, the structure on the first block-row forces $x_{\{1,4\}}=0$.  Since this argument holds for every matrix in $\S(G)$, every matrix in $\S(G)$ has the SAP.
\end{ex}

\begin{ex}
\label{ex:K13}
Let $G=K_{1,3}$.  Consider the matrices $A$ and $X$ as 
\[A=\begin{bmatrix}
d_1 & a_1 & a_2 & a_3 \\
a_1 & d_2 & 0 & 0 \\
a_2 & 0 & d_3 & 0 \\
a_3 & 0 & 0 & d_4 \\
\end{bmatrix}\text{ and }X=
\begin{bmatrix}
0 & 0 & 0 & 0 \\
0 & 0 & x_{\{2,3\}} & x_{\{2,4\}} \\
0 & x_{\{2,3\}} & 0 & x_{\{3,4\}} \\
0 & x_{\{2,4\}} & x_{\{3,4\}} & 0 \\
\end{bmatrix}.\]
Let ${\bf v}_j$ be the $j$-th column of $A$.  Then the SAP matrix of $A$ with respect to the order $(\{2,3\},\{3,4\},\{2,3\})$ can be written as 
\[
\Psi =
\kbordermatrix{
& x_{\{2,3\}} &x_{\{3,4\}}&x_{\{2,4\}}\\
1&{\bf 0}   & {\bf 0}  & {\bf 0}\\
2&{\bf v}_3 & {\bf 0}  & {\bf v}_4\\
3&{\bf v}_2 & {\bf v}_4& {\bf 0}\\
4&{\bf 0}   & {\bf v}_3& {\bf v}_2\\
}.\]
Recall that the row with index $(i,j)$ is the $i$-th row in the $j$-th block.  Thus the submatrix induced by rows $\{(1,2),(1,3),(1,4)\}$ is
\[\begin{bmatrix}
a_2 & 0 & a_3 \\
a_1 & a_3 & 0 \\
0 & a_2 & a_1 \\
\end{bmatrix},\]
whose determinant is always nonzero if $a_1,a_2,a_3\neq 0$.  This means the SAP matrix of $A$ is always full-rank, regardless the choice of $A\in\S(G)$.  Hence every matrix $A\in \S(G)$ has the SAP.  This reason behind this is because a $3$-cycle appears in $\ol{G}$.
\end{ex}

As shown in Example \ref{ex:P4} and Example \ref{ex:K13}, some graph structures guarantee that every matrix described by the graph has the SAP.  This assurance is given by forcing $x_e=0$ step by step or by the occurrence of some odd cycle inside $\ol{G}$.  Utilizing these ideas, we design the \textit{SAP zero forcing game}, where the information $x_e=0$ is stored by coloring the non-edge $e$ blue.

Different from the conventional zero forcing game, the SAP zero forcing game is coloring ``non-edges'' to be blue or white, instead of coloring vertices;  also, a set of initial blue non-edges is called a zero forcing set if every non-edge turns blue eventually by repeated applications of the given color change rules.

Let $G$ be a graph and $i\in V(G)$.  Recall that $N_G(i)$ is the neighborhood of $i$ in $G$.  For $B_E$ a set of edges (2-sets), the notation $N_{B_E}(i)$ denotes the vertices $j$ with $\{i,j\}\in B_E$.  

The definition of $\Zsap(G)$ uses the concept of local games, which we now define.  

\begin{defn}
\label{defn:lg}
Let $G$ be a graph with some non-edges $B_E$ colored blue, and $k\in V(G)$.  The \textit{local game} $\lg{G}{B_E}{k}{Z}$ is the conventional zero forcing game on $G$ equipped with CCR-$Z$ and the initial blue set $\lset{G}{B_E}{k}:=N_G[k]\cup N_{B_E}(k)$.  When $Z$ is replaced by another zero forcing rules, such as $Z_\ell$ or $Z_+$, the setting remains the same but a different rule applies.
\end{defn}

\begin{defn}
\label{defn:Zsap}
For a graph $G$, the \textit{SAP zero forcing number} $\Zsap(G)$ is the minimum number of blue non-edges such that every non-edge will become blue by repeated applications of the \textit{color change rule for} $\Zsap$ (CCR-$\Zsap$):
\begin{itemize}
\item Suppose at some stage, $B_E$ is the set of blue non-edges and $\{j,k\}$ is a white non-edge.  If $i\rightarrow j$ in $\lg{G}{B_E}{k}{Z}$ for some vertex $i$, then the non-edge $\{j,k\}$ is changed to blue.  This is denoted as $\ft{k}{i}{j}$.
\item Let $\ol{G}_W$ be the graph whose edges are the white non-edges.  If for some vertex $i$, $\ol{G}_W[N_G(i)]$ contains a component that is an odd cycle $C$, then all non-edges on $C$ turn blue.  This is denoted as $(i\rightarrow C)$.
\end{itemize}
The three vertices $i$, $j$, and $k$ in the first rule is called a \textit{forcing triple}; the second rule is called the \textit{odd cycle rule}.
\end{defn}

The odd cycle rule follows a similar idea from the odd cycle zero forcing number \cite{Zoc}.

\begin{lem}
\label{lem:oc}
For any nonzero real numbers $a_1,a_2,\ldots ,a_n$ with $n$ odd, a matrix of the form 
\[\begin{bmatrix}
a_2 & 0 & \cdots & 0 & a_n \\
a_1 & a_3 & 0 &  & 0\\
0 & a_2 & \ddots & \ddots &  \vdots \\
\vdots & 0 &\ddots & a_n & 0\\
0 & \cdots & 0 & a_{n-1} & a_1\\
\end{bmatrix}\]
is nonsingular.
\end{lem}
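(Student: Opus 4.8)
The plan is to compute the determinant of the displayed $n\times n$ matrix, call it $M$, directly from its column structure. Writing $\mathbf e_1,\dots,\mathbf e_n$ for the standard basis vectors and reading $M$ column by column, one sees that the $j$-th column of $M$ is $a_{j+1}\mathbf e_j+a_j\mathbf e_{j+1}$, where all subscripts on the $a$'s and $\mathbf e$'s are interpreted modulo $n$ (so $a_{n+1}=a_1$ and $\mathbf e_{n+1}=\mathbf e_1$); the wrap-around entries $a_n$ in position $(1,n)$ and $a_1$ in position $(n,n)$ are precisely the case $j=n$. Thus
\[
M=\bigl[\;a_2\mathbf e_1+a_1\mathbf e_2\;\big|\;a_3\mathbf e_2+a_2\mathbf e_3\;\big|\;\cdots\;\big|\;a_1\mathbf e_n+a_n\mathbf e_1\;\bigr],
\]
and I would expand $\det M$ multilinearly over the $2^n$ ways of choosing, from each column, one of its two summands.

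A choice contributes a nonzero term only if the $n$ chosen basis vectors are pairwise distinct, i.e.\ form a permutation of $\mathbf e_1,\dots,\mathbf e_n$. Say column $j$ makes the \emph{shift choice} if it contributes $a_j\mathbf e_{j+1}$ and the \emph{diagonal choice} if it contributes $a_{j+1}\mathbf e_j$. The basis vector $\mathbf e_i$ is produced by column $i$ exactly when column $i$ is diagonal, and by column $i-1$ exactly when column $i-1$ is shift; so $\mathbf e_i$ is produced exactly once for every $i$ if and only if columns $i-1$ and $i$ always have the same type. Since the cycle $1,2,\dots,n,1$ is connected, this forces all columns to have the same type. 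Hence only two selections survive: the all-diagonal selection, which gives the identity permutation with coefficient $\prod_{j=1}^n a_{j+1}=\prod_{j=1}^n a_j$ and sign $+1$; and the all-shift selection, which gives the $n$-cycle $j\mapsto j+1$, with coefficient $\prod_{j=1}^n a_j$ and sign $(-1)^{n-1}$.

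Adding the two terms yields $\det M=\bigl(1+(-1)^{n-1}\bigr)\prod_{j=1}^n a_j$, which equals $2\prod_{j=1}^n a_j\neq 0$ whenever $n$ is odd; this is consistent with the value $2a_1a_2a_3$ obtained in Example \ref{ex:K13}. (For even $n$ the two terms cancel, which is why oddness of $n$ is essential.) The only delicate point is the combinatorial claim that every ``mixed'' selection vanishes, and the same-type-propagation argument above settles it; one just has to be careful to read all indices modulo $n$ so that the wrap-around column $j=n$ is handled on the same footing as the rest. An alternative route is to observe that $M$ is obtained from $I+C$, with $C$ the cyclic permutation matrix $C\mathbf e_j=\mathbf e_{j+1}$, by multiplying on the left and right by invertible diagonal matrices; since the eigenvalues of $C$ are the $n$th roots of unity, $\det(I+C)=\prod_{k=0}^{n-1}(1+\omega^k)=1-(-1)^n$, again giving $2$ for odd $n$. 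I would present the multilinear expansion as the main argument since it is elementary and self-contained.
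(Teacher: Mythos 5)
Your proof is correct and follows the same route as the paper, which simply asserts that $\det(A)=2\prod_{i=1}^n a_i$ when $n$ is odd; your multilinear column expansion (only the all-diagonal and all-shift selections survive, contributing $\prod a_j$ and $(-1)^{n-1}\prod a_j$) supplies the detailed justification of exactly that determinant formula, and your check against the $3\times 3$ case and the remark on cancellation for even $n$ are consistent with it.
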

\begin{proof}
Let $A$ be a matrix of the described form.  When $n$ is odd, 
\[\det(A)=2\prod_{i=1}^na_i,\]
which is nonzero provided that $a_i$'s are all nonzero.  Hence $A$ is nonsingular.
\end{proof}

\begin{theorem}
\label{thm:Zsap}
Suppose $G$ is a graph with $\Zsap(G)=0$.  Then every matrix in $\S(G)$ has the SAP.  Therefore, $M(G)=\xi(G)$, $M_+(G)=\nu(G)$, and $M_\mu(G)=\mu(G)$.
\end{theorem}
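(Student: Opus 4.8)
The plan is to show that when $\Zsap(G)=0$, the SAP system of every $A\in\S(G)$ forces all the non-edge variables to be zero, so that by Remark~\ref{rem:system} the SAP matrix is full-rank and $A$ has the SAP. Since $\Zsap(G)=0$, there is a sequence of applications of CCR-$\Zsap$ that, starting from no blue non-edges, turns every non-edge blue. I would fix such a sequence and induct along it, maintaining the invariant: if $\{j,k\}$ has been colored blue at some point in the sequence, then $x_{\{j,k\}}=0$ in every solution $X$ of the SAP system of $A$. Concretely, I would look at the system in the form displayed in Remark~\ref{rem:system}: for each vertex $k$, the $k$-th block of rows gives the equation $\sum_{j\notin N_G[k]}x_{\{j,k\}}\,{\bf v}_j={\bf 0}$, where ${\bf v}_j$ is the $j$-th column of $A$.

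The induction has two cases, one for each rule. For a forcing triple $\ft{k}{i}{j}$: by hypothesis every non-edge $\{j',k\}$ that is already blue has $x_{\{j',k\}}=0$, so the $k$-th block equation reduces to $\sum_{j'} x_{\{j',k\}}{\bf v}_{j'}={\bf 0}$ where $j'$ ranges only over those vertices with $\{j',k\}$ still white, i.e.\ over $V(G)\setminus(N_G[k]\cup N_{B_E}(k)) = V(G)\setminus\lset{G}{B_E}{k}$. This is exactly a relation $\sum_{s\notin \lset{G}{B_E}{k}} x_s {\bf v}_s = {\bf 0}$, so Proposition~\ref{prop:forcing} applied to the local game $\lg{G}{B_E}{k}{Z}$ (whose initial blue set is precisely $\lset{G}{B_E}{k}$) tells us that $i\rightarrow j$ in that game implies $x_{\{j,k\}}=0$, which extends the invariant. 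For the odd cycle rule $(i\rightarrow C)$ on an odd cycle $C$ in $\ol{G}_W[N_G(i)]$: I would use the $i$-th row of each block. On the $\{p,q\}$ column, the $i$-th row of the $p$-th block is the entry $a_{i,q}$ and the $i$-th row of the $q$-th block is $a_{i,p}$; restricting to the blocks indexed by vertices of $C$ and the columns indexed by edges of $C$, and using that all the white non-edges incident to vertices of $C$ that are not edges of $C$ are themselves... — more carefully, I would extract from the equations $(AX)_{i,q}=0$ for $q\in V(C)$ the subsystem involving only the variables $x_e$ for $e\in E(C)$, observe that the already-blue variables contribute nothing, and that the variables $x_{\{q,r\}}$ with $r\notin V(C)$ either are blue (hence zero) or would make $C$ not a component of $\ol{G}_W[N_G(i)]$; the remaining coefficient matrix is, after ordering the vertices of $C$ cyclically, exactly the matrix of Lemma~\ref{lem:oc} with $a_t = a_{i,\cdot}$ the relevant nonzero off-diagonal entries of $A$ (nonzero because $C\subseteq N_G(i)$ means each vertex of $C$ is adjacent to $i$ in $G$). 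By Lemma~\ref{lem:oc} this matrix is nonsingular, forcing all $x_e=0$ for $e\in E(C)$, which again extends the invariant.

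Once the induction is complete, every non-edge is blue at the end of the forcing process, so $x_{\{j,k\}}=0$ for all non-edges $\{j,k\}$; hence $X=O$ is the only solution, i.e.\ $A$ has the SAP. Since this holds for every $A\in\S(G)$, and $\S_+(G)\subseteq\S(G)$ and the generalized-Laplacian-with-one-negative-eigenvalue family is contained in $\S(G)$, every matrix in those subfamilies also has the SAP; therefore the SAP-restricted maximum nullities equal the unrestricted ones, giving $M(G)=\xi(G)$, $M_+(G)=\nu(G)$, and $M_\mu(G)=\mu(G)$.

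I expect the main obstacle to be the bookkeeping in the odd cycle case: one must be careful that, for a vertex $q$ on the cycle $C$, the equation $(AX)_{i,q}=0$ involves only $x_e$ with $e$ a non-edge incident to $q$, and to argue cleanly that every such $e$ either lies on $C$ (contributing to the Lemma~\ref{lem:oc} matrix) or is already blue (contributing $0$) — this uses both that $C$ is an \emph{induced} cycle in $\ol{G}_W$ restricted to $N_G(i)$ and that $C$ is a whole \emph{component} there, so no white non-edge leaves $C$ within $N_G(i)$, while non-edges from $q$ to vertices outside $N_G(i)$ are edges of $G$ at $i$... — actually such pairs $\{q,r\}$ with $r\notin N_G[i]$ need separate handling, and the correct observation is that in the block structure only the $i$-th rows are used and the relevant entries $a_{i,q}$ vanish unless $q\in N_G(i)$, so only columns $\{p,q\}$ with both $p,q\in N_G(i)$ survive; among those the white ones incident to $V(C)$ are exactly $E(C)$ since $C$ is a component of $\ol{G}_W[N_G(i)]$. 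Making this last step airtight is the crux; the forcing-triple case is a direct citation of Proposition~\ref{prop:forcing} and should be routine.
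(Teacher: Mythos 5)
Your proposal is correct and follows essentially the same route as the paper's proof: an induction along the forcing sequence in which the forcing-triple case reduces, via the local game $\lg{G}{B_E}{k}{Z}$, to Proposition~\ref{prop:forcing}, and the odd-cycle case extracts the rows $(i,k_s)$, $k_s\in V(C)$, uses that $a_{i,j}=0$ for $j\notin N_G(i)$ and that $C$ is a component of $\ol{G}_W[N_G(i)]$ to kill all other white columns, and invokes Lemma~\ref{lem:oc}. The point you flagged as the crux (handling non-edges $\{q,r\}$ with $r\notin N_G[i]$ versus those inside $N_G(i)$) is resolved exactly as in the paper, so no gap remains.
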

\begin{proof}
Let $A=\begin{bmatrix}a_{i,j}\end{bmatrix}\in \S(G)$ with ${\bf v}_j$ as the $j$-th column vector.  Pick an order for the set of non-edges, and let $\Psi$ be the SAP matrix for $A$ with respect to the given order.  Suppose ${\bf x}$ is a vector such that $\Psi{}{\bf x}={\bf 0}$.  Then ${\bf x}=(x_e)_{e\in E(\ol{G})}$ such that the entries of ${\bf x}$ are indexed by the non-edges of $G$ in the given order.  We relate the SAP zero forcing game to the zero-nonzero pattern of ${\bf x}$.

{\bf Claim 1:} Suppose at some stage, $B_E$ is the set of blue non-edges, and $\ft{k}{i}{j}$ is a forcing triple.  Then $x_{e}=0$ for all $e\in B_E$ implies $x_{\{j,k\}}=0$.

To establish the claim, recall that the condition $\Psi{}{\bf x}={\bf 0}$ on those rows in the $k$-th block means
\[\sum_{s\notin N_G[k]}x_{\{s,k\}}{\bf v}_s={\bf 0}.\]
Suppose $x_{e}=0$ for all $e\in B_E$.  Then this equality reduces to 
\[\sum_{s\notin N_G[k]\cup N_{B_E}(k)}x_{\{s,k\}}{\bf v}_s={\bf 0}.\]
Since by Definition \ref{defn:lg} the set $\lset{G}{B_E}{k}=N_G[k]\cup N_{B_E}(k)$ is exactly the set of initial blue vertices in $\lg{G}{B_E}{k}{Z}$, the force $i\rightarrow j$ in $\lg{G}{B_E}{k}{Z}$ implies $x_{\{j,k\}}=0$ by Proposition \ref{prop:forcing}.

{\bf Claim 2:} Suppose at some stage, $B_E$ is the set of blue non-edges, and $(i\rightarrow C)$ is applied by the odd cycle rule.  Then $x_{e}=0$ for all $e\in B_E$ implies $x_{e}=0$ for every $e\in E(C)$. 

To establish the claim, let $\ol{G}_W$ be the graph whose edges are the white non-edges at this stage.  Since $(i\rightarrow C)$ is applied by the odd cycle rule, $C$ is a component in $\ol{G}_W[N_G(i)]$ and $|V(C)|=d$ is an odd number.  Following the cyclic order, write the vertices in $V(C)$ as $\{k_s\}_{s=1}^d$, and $e_s=\{k_s,k_{s+1}\}$, with the index taken modulo $d$. 
  
Denote $U=\{(i,k_s)\}_{s=1}^d$, $W_1=\{e_s\}_{s=1}^d$, and $W_2$ as those white non-edges not in $W_1$.  For each $(i,k_s)\in U$, $\Psi_{(i,k_s),e_{s-1}}=a_{i,k_{s-1}}$ and $\Psi_{(i,k_s),e_s}=a_{i,k_{s+1}}$; for every white non-edge $e=\{j,h\}$ other than $e_{s-1}$ and $e_s$, either $k_s\notin\{j,h\}$ or $k_s=h$ but $j$ is not adjacent to $i$, so $\Psi_{(i,k_s),e}=0$ by Definition \ref{defn:system}.  This means $\Psi[U,W_2]=O$ and $\Psi[U,W_1]$ is of the form described in Lemma \ref{lem:oc}.  Consequently, $x_{e}=0$ for all $e\in B_E$ implies $x_{e}=0$ for every non-edge $e\in E(C)$.

By the claims, $\Zsap(G)=0$ means all of the $x_{e}$ will be forced to zero, so ${\bf x}={\bf 0}$ is the only vector in the right kernel of $\Psi$.  This means $\Psi{}$ is full-rank.

Since the argument works for every matrix $A\in \S(G)$, $\Zsap(G)=0$ implies every matrix $A\in\S(G)$ has the SAP.  Consequently, $M(G)=\xi(G)$, $M_+(G)=\nu(G)$, and $M_\mu(G)=\mu(G)$.
\end{proof}

\begin{rem}
With and without the restriction of having the SAP, the inertia sets that can be achieved by matrices in $\S(G)$ are considered in the literature (e.g.,~see \cite{BHL09,AHLvdH13}).  With the help of Theorem \ref{thm:Zsap}, if $\Zsap(G)=0$, then these two inertia sets are the same.
\end{rem}

\begin{cor}
\label{cor:forest}
If $G$ has no isolated vertices and $\ol{G}$ is a forest, then $\Zsap(G)=0$ and every matrix in $\S(G)$ has the SAP.
\end{cor}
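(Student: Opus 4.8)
The plan is to prove Corollary \ref{cor:forest} by showing that the SAP zero forcing game on $G$, starting from the empty blue set of non-edges, eventually colors every non-edge blue, i.e.\ $\Zsap(G)=0$; the conclusion about the SAP then follows immediately from Theorem \ref{thm:Zsap}. The key observation is that $\ol{G}$ being a forest means the white non-edges at any stage form a subgraph of a forest, hence have no cycles at all, so the odd cycle rule will never be needed; the whole argument will run on the forcing-triple rule.

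First I would set up the induction. Since $\ol{G}$ is a forest, every nonempty subgraph of $\ol{G}$ has a leaf or an isolated vertex; in particular, if $B_E$ is the current set of blue non-edges and not all non-edges are blue, the graph $\ol{G}_W$ of white non-edges has an edge, and choosing a non-trivial tree component of $\ol{G}_W$, it has a vertex $k$ of degree exactly $1$ in $\ol{G}_W$. Write $\{j,k\}$ for the unique white non-edge at $k$. The goal is to exhibit a vertex $i$ with $\ft{k}{i}{j}$, which forces $\{j,k\}$ blue and thereby strictly decreases the number of white non-edges; iterating drives that count to zero.

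The heart of the proof is verifying the local forcing step: that in the local game $\lg{G}{B_E}{k}{Z}$, some blue vertex has $j$ as its unique white neighbor. The initial blue set is $\lset{G}{B_E}{k}=N_G[k]\cup N_{B_E}(k)$. The white vertices of this local game are exactly the vertices $s\notin N_G[k]$ with $\{s,k\}$ a white non-edge; by the degree-$1$ choice of $k$ in $\ol{G}_W$, the only such $s$ is $j$. Hence $j$ is the \emph{only} white vertex in $\lg{G}{B_E}{k}{Z}$. Now any blue neighbor of $j$ in $G$ performs the force $i\rightarrow j$; since $G$ has no isolated vertices, $j$ has a neighbor in $G$, and every neighbor of $j$ lies in the (all-blue except for $j$) vertex set, so such an $i$ exists. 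This yields the forcing triple $\ft{k}{i}{j}$, completing the induction step. I would then note that $\Zsap(G)\ge 0$ trivially and that we have shown $0$ blue non-edges suffice, so $\Zsap(G)=0$; Theorem \ref{thm:Zsap} gives that every matrix in $\S(G)$ has the SAP.

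The step I expect to require the most care is confirming that $j$ is genuinely the unique white vertex of the local game and that at least one of its $G$-neighbors is available to force — this is where the two hypotheses (no isolated vertices, $\ol{G}$ a forest) are both used, and one must be careful that "white in the local game" means "not in $N_G[k]\cup N_{B_E}(k)$," not "white as a non-edge." Everything else is a routine descent argument. A clean alternative phrasing, if preferred, is to fix a linear order on $V(G)$ that lists the leaves of $\ol{G}$ last (so that each vertex is a leaf of the forest induced by itself and later vertices) and process non-edges accordingly; but the leaf-stripping induction above is the most transparent.
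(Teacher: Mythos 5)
Your proposal is correct and follows essentially the same argument as the paper: repeatedly take a leaf $k$ of the forest of white non-edges, observe that its unique white non-edge partner $j$ is the only white vertex in the local game $\lg{G}{B_E}{k}{Z}$, and use a $G$-neighbor $i$ of $j$ (which exists since $G$ has no isolated vertices) to perform $\ft{k}{i}{j}$, then conclude via Theorem \ref{thm:Zsap}. Your added care in distinguishing white vertices of the local game from white non-edges is a faithful elaboration of the paper's shorter wording, not a different route.
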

\begin{proof}
Suppose at some stage $\ol{G}_W$ is the graph whose edges are the white non-edges.  Since $\ol{G}$ is a forest, $\ol{G}_W$ always has a leaf $k$, unless $\ol{G}_W$ contains no edge.  Let $j$ be the only neighbor of $k$ in $\ol{G}_W$, and let $i$ be one of the neighbor of $j$ in $G$.  Since $G$ has no isolated vertices, $i$ always exists.  Thus, in the local game $\lg{G}{E(\ol{G})\setminus E(\ol{G}_W)}{k}{Z}$, every vertex is blue except $j$, so $i\rightarrow j$.  Therefore, $\ft{k}{i}{j}$ applies and $\{j,k\}$ turns blue.  Continuing this process, all non-edge becomes blue, so $\Zsap(G)=0$.  
\end{proof}

Note that the condition that $G$ has no isolated vertices is crucial for Corollary \ref{cor:forest}.  For example, $\Zsap(\ol{K_{1,n}})> 0$.  In fact, $\Zsap(G)=0$ does not happen only when $\ol{G}$ is a forest.  Example \ref{ex:Zsap0} gives a graph $G$ such that $\ol{G}$ is not a forest and $\Zsap(G)=0$.  We will see in Table \ref{tbl:Zsap} that there are a considerable number of graphs having the property $\Zsap(G)=0$.

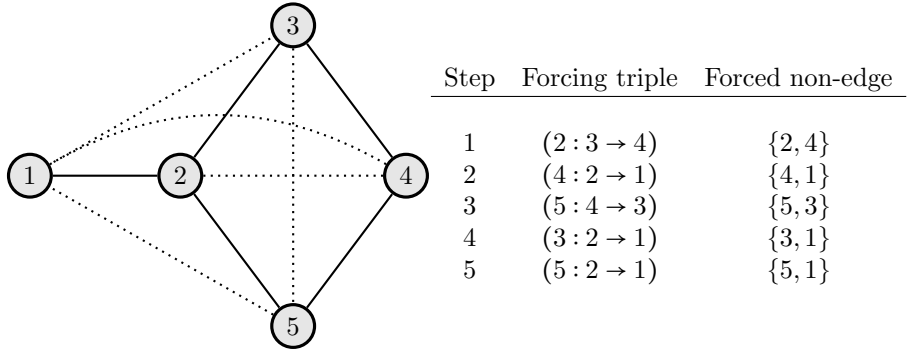
\begin{figure}[h]
\begin{center}\begin{tikzpicture}
\node [whitenode] (1) at (-2.5,0) {1};
\node [whitenode] (2) at (-0.5,0) {2};
\node [whitenode] (3) at (1,2) {3};
\node [whitenode] (4) at (2.5,0) {4};
\node [whitenode] (5) at (1,-2) {5};

\draw (1)--(2)--(3)--(4)--(5)--(2);
\path[dotted] (2) edge (4)
    (4) edge [bend right] (1)
    (1) edge (3)
    (3) edge (5)
    (5) edge (1);
    
\node[anchor=west] at (2.7,0) {
$\begin{array}{ccc}
\text{Step}& \text{Forcing triple} & \text{Forced non-edge}\\
\hline \\
1&\ft{2}{3}{4}&\{2,4\}\\
2&\ft{4}{2}{1}&\{4,1\}\\
3&\ft{5}{4}{3}&\{5,3\}\\
4&\ft{3}{2}{1}&\{3,1\}\\
5&\ft{5}{2}{1}&\{5,1\}\\
\end{array}$
};

\end{tikzpicture}\end{center}
\caption{The graph $G$ for Example \ref{ex:Zsap0} and the forcing process.}
\label{fig:kite}
\end{figure}

\begin{ex}
\label{ex:Zsap0}
Let $G$ be the graph shown in Figure \ref{fig:kite}.  Following the steps listed in Figure \ref{fig:kite}, every non-edge turns blue, so $\Zsap(G)=0$.  Observe that at the beginning, the graph $\ol{G}_W$ of white non-edges is the same as $\ol{G}$, and $\ol{G}_W[N_G(2)]$ is a $3$-cycle $C$, so one can also use the odd cycle rule to perform $(2\rightarrow C)$.  This will accelerate the process but not change the result.  By Theorem \ref{thm:Zsap}, every matrix $A\in\S(G)$ has the SAP, so $\xi(G)=M(G)$.  Since the number of vertices is no more than 7, $M(G)=Z(G)=2$ and thus $\xi(G)=2$.
\end{ex}

\begin{cor}
Let $G$ be any graph with diameter $2$ and $\Delta(G)\leq 3$.  Then $\Zsap(G)=0$.  In particular, when $G$ is the Petersen graph, $\Zsap(G)=0$, so $\xi(G)=M(G)=5$.
\end{cor}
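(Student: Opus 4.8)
The plan is to show directly that, starting from no blue non-edges, whenever some non-edge is still white at least one white non-edge can be forced blue; iterating this gives $\Zsap(G)=0$. So fix an arbitrary stage of the SAP zero forcing game with current blue non-edge set $B_E$ and an arbitrary white non-edge $\{j,k\}$. Since $\mathrm{diam}(G)=2$, the non-adjacent vertices $j$ and $k$ have a common neighbour $i$, with $i\notin\{j,k\}$ and, because $\Delta(G)\le 3$, $\deg_G(i)\in\{2,3\}$. I would record two facts used throughout: because $\{j,k\}$ is a non-edge not in $B_E$, the vertex $j$ is white in the local game $\lg{G}{B_E}{k}{Z}$ (it lies in neither $N_G[k]$ nor $N_{B_E}(k)$) and, symmetrically, $k$ is white in $\lg{G}{B_E}{j}{Z}$; moreover $i$, being adjacent to both $j$ and $k$, is blue in both of these local games.

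The main step is a case analysis on $N_G(i)$. If $\deg_G(i)=2$, then $N_G(i)=\{j,k\}$, so in $\lg{G}{B_E}{k}{Z}$ the blue vertex $i$ has $j$ as its only white neighbour, giving $i\to j$ and hence the forcing triple $\ft{k}{i}{j}$, which turns $\{j,k\}$ blue. If $\deg_G(i)=3$, write $N_G(i)=\{j,k,\ell\}$ with $\ell\notin\{j,k\}$ and distinguish three subcases. (a) If $\ell\in N_G[k]\cup N_{B_E}(k)$, then $k$ and $\ell$ are both blue in $\lg{G}{B_E}{k}{Z}$, so $i$'s only white neighbour is $j$ and $\ft{k}{i}{j}$ applies. (b) Otherwise, if $\ell\in N_G[j]\cup N_{B_E}(j)$, the symmetric argument in $\lg{G}{B_E}{j}{Z}$ yields $i\to k$ and $\ft{j}{i}{k}$. (c) Otherwise $\{k,\ell\}$ and $\{j,\ell\}$ are both white non-edges of $G$, and so is $\{j,k\}$, whence $\ol{G}_W[N_G(i)]=\ol{G}_W[\{j,k,\ell\}]$ is the triangle $C_3$, a single component that is an odd cycle, and the odd cycle rule $(i\to C)$ turns $\{j,k\}$ (together with $\{j,\ell\}$ and $\{k,\ell\}$) blue. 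In every case a white non-edge becomes blue, so $\Zsap(G)=0$.

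For the Petersen graph $G$, which is $3$-regular with diameter $2$, the above gives $\Zsap(G)=0$, so Theorem \ref{thm:Zsap} gives $\xi(G)=M(G)$. The value $M(G)=5$ is standard: with $A$ the adjacency matrix, $A-I\in\S(G)$ has nullity $5$ since the eigenvalue $1$ of the Petersen graph has multiplicity $5$, so $M(G)\ge 5$; and $M(G)\le Z(G)=5$ since the outer $5$-cycle is a zero forcing set. Hence $\xi(G)=M(G)=5$.

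The only non-routine point is subcase (c): one must check that failure to force through $i$ in both local games is \emph{equivalent} to $N_G(i)$ being an independent triple all three of whose non-edges are still white, and that $\ol{G}_W[N_G(i)]$ is then exactly $C_3$ rather than a larger graph containing $C_3$ merely as a non-component subgraph. The bound $\Delta(G)\le 3$ is precisely what collapses the argument to these few cases and makes the dichotomy ``immediate force, or odd cycle rule'' exhaustive.
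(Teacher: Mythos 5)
Your proposal is correct and follows essentially the same route as the paper: use diameter $2$ to get a common neighbour $i$ of the white non-edge, then split on $\deg_G(i)\in\{2,3\}$, obtaining either a forcing triple or (when $N_G(i)$ spans three white non-edges) the odd cycle rule applied to the triangle $\ol{G}_W[N_G(i)]=C_3$; your subcases (a)--(c) are just a reorganization of the paper's ``$P_2$, $P_3$, or $C_3$'' analysis. The only cosmetic difference is that you verify $M(G)=5$ for the Petersen graph directly (eigenvalue multiplicity plus $Z(G)=5$), whereas the paper cites this value from the literature.
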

\begin{proof}
For every white non-edge $\{j,k\}$, there is at least one common neighbor $i$ of $j$ and $k$, since the diameter is $2$.  By the assumption, $\deg_G(i)\leq 3$.  Since $i$ has at least two neighbors, $\deg_G(i)\geq 2$.  If $\deg_G(i)=2$, then $\ft{k}{i}{j}$.  Suppose $\deg_G(i)=3$.  On the set $N_G(i)$, the white non-edges can form $P_2$, $P_3$, or $C_3$.  In the case of $P_2$ and $P_3$, one of $j$ and $k$ must be the endpoint of the path, say $k$, so $\ft{k}{i}{j}$.  If it is $C_3$, then apply the odd cycle rule $(i\rightarrow C)$.  Since this argument works for every white non-edge, every non-edge can be colored blue.  Hence $\Zsap(G)=0$.

Let $G$ be the Petersen graph.  Then $G$ is a $3$-regular graph with diameter $2$.  Thus, $\Zsap(G)=0$, and $\xi(G)=M(G)$ by Theorem \ref{thm:Zsap}.  It is known \cite{AIM} that $M(G)=5$.
\end{proof}

In \cite{xi}, it is asked if $\xi(G)\leq \xi(G-v)+1$ for every graphs $G$ and every vertex $v$ of $G$.  Theorem \ref{thm:Zsap} answers this question in positive for a large number of graph-vertex pairs.
\begin{cor}
Let $G$ be a graph and $v\in V(G)$.  Suppose $\Zsap(G-v)=0$.  Then $\xi(G)\leq \xi(G-v)+1$.
\end{cor}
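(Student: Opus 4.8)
The plan is to deduce the inequality by chaining together three facts, two of which are essentially immediate and one of which is the classical interlacing bound, using Theorem \ref{thm:Zsap} to bridge $\xi$ and $M$ on the smaller graph. Concretely, I would argue
\[\xi(G)\leq M(G)\leq M(G-v)+1=\xi(G-v)+1,\]
and then justify each relation in turn.

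First I would note that $\xi(G)\leq M(G)$ holds for every graph $G$, since $\xi(G)$ is the maximum of $\nul(A)$ over the subfamily of matrices $A\in\S(G)$ having the SAP, while $M(G)$ is the maximum over all of $\S(G)$. Next I would establish $M(G)\leq M(G-v)+1$: if $A\in\S(G)$ attains $\nul(A)=M(G)$, then the principal submatrix $A'$ obtained by deleting the row and column indexed by $v$ lies in $\S(G-v)$, and by Cauchy eigenvalue interlacing the multiplicity of $0$ as an eigenvalue of $A'$ is at least $\nul(A)-1$, so $M(G-v)\geq\nul(A')\geq M(G)-1$. Finally I would invoke the hypothesis $\Zsap(G-v)=0$: by Theorem \ref{thm:Zsap}, every matrix in $\S(G-v)$ has the SAP, hence $\xi(G-v)=M(G-v)$. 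Substituting this equality into the chain above yields $\xi(G)\leq\xi(G-v)+1$.

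I do not expect a serious obstacle here; the only step requiring any standard input is the interlacing bound $M(G)\leq M(G-v)+1$, which is a well-known consequence of Cauchy interlacing for principal submatrices of real symmetric matrices, and could be cited rather than reproved. The conceptual content is entirely carried by Theorem \ref{thm:Zsap}, which converts the (always valid) inequality $\xi(G-v)\leq M(G-v)$ into an equality precisely when $\Zsap(G-v)=0$; without that equality the chain would only give $\xi(G)\leq M(G-v)+1$, which need not imply the desired bound.
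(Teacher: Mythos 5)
Your proposal is correct and follows essentially the same route as the paper: the paper also chains $\xi(G)\leq M(G)\leq M(G-v)+1=\xi(G-v)+1$, using Theorem \ref{thm:Zsap} to get $\xi(G-v)=M(G-v)$ and citing the literature for $M(G)\leq M(G-v)+1$ (which you instead justify directly via Cauchy interlacing, an acceptable substitute).
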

\begin{proof}
Since $\Zsap(G-v)=0$, $\xi(G-v)=M(G-v)$ by Theorem \ref{thm:Zsap}.  Therefore,
\[\xi(G)\leq M(G)\leq M(G-v)+1=\xi(G-v)+1,\]
where the inequality $M(G)\leq M(G-v)+1$ is given in \cite{EHHLR}.
\end{proof}

\begin{ex}
\label{ex:platonic}
Let $G$ be one of the tetrahedron $K_4$, cube $Q_3$, octahedron $G_8$, dodecahedron $G_{12}$, or icosahedron $G_{20}$.  Then, $\Zsap(G)=0$.  This is trivial for tetrahedron, since it is a complete graph.  The complement of an octahedron is three disjoint edges, which is a forest, so $\Zsap(G)=0$.  For the other three graphs, pick one vertex $i$ and look at its neighborhood $N_G(i)$.  The induced subgraph of $\ol{G}$ on $N_G(i)$ is either a $3$-cycle or a $5$-cycle.  Thus the odd cycle rule could apply, and every non-edge in $N_G(i)$ is colored blue.  After doing this to every vertex, by picking one vertex and look at its local game, all white non-edge incident to this vertex will be colored blue.  Therefore, $\xi(G)=M(G)$.  

It is known \cite{Yeh} that $M(K_4)=3$ and $M(Q_3)=4$.  Since the octahedron graph is strongly regular, in \cite{AIM} it shows $4\leq M(G_8)$; together with the fact $Z(G_8)\leq 4$, we know $M(G_8)=4$.  For $G_{12}$ and $G_{20}$, the zero forcing numbers can be computed through the computer program and both equal to $6$, but the maximum nullity is not yet known.
\end{ex}

\begin{defn}
Let $G$ be a graph with some non-edges $B_E$ colored blue.  The color change rule for $\Zsap^+$ (CCR-$\Zsap^+$) is the following:
\begin{itemize}
\item Let $\{j,k\}$ be a non-edge.  If $i\rightarrow j$ in $\lg{G}{B_E}{k}{Z_+}$ for some vertex $i$, then the non-edge $\{j,k\}$ is changed to blue.  This is denoted as $\ft{k}{i}{j}$.
\item The odd cycle rule can be used to perform a force.
\end{itemize}
Similarly, the color change rule of $\Zsap^\ell$ (CCR-$\Zsap^\ell$) is defined through the local game $\lg{G}{B_E}{i}{Z_\ell}$.  As usual, $\Zsap^+(G)$ (respectively, $\Zsap^\ell$) is the minimum number of blue non-edges such that every non-edge will become blue by repeated applications of CCR-$\Zsap^+$ (respectively, CCR-$\Zsap^\ell$).
\end{defn}

\begin{obs}
For any graph $G$, $\Zsap^+(G)\leq \Zsap^\ell(G)\leq \Zsap(G)$.
\end{obs}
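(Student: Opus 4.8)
The plan is to prove the two inequalities by showing that each of the three color change rules dominates the previous one move-by-move. Concretely, I would establish the following: for any graph $G$ and any set $B_E$ of non-edges colored blue, every configuration reachable from $B_E$ by one application of CCR-$\Zsap$ is also reachable from $B_E$ by one application of CCR-$\Zsap^\ell$, and every configuration reachable from $B_E$ by one application of CCR-$\Zsap^\ell$ is reachable from $B_E$ by one application of CCR-$\Zsap^+$. Granting this, if $B_E$ is a zero forcing set for $\Zsap$, then running the very same sequence of moves, each reinterpreted as a CCR-$\Zsap^\ell$ move, shows that $B_E$ is a zero forcing set for $\Zsap^\ell$; taking $B_E$ of minimum cardinality gives $\Zsap^\ell(G)\le\Zsap(G)$, and the identical reasoning with $\Zsap^\ell$ and $\Zsap^+$ gives $\Zsap^+(G)\le\Zsap^\ell(G)$.

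For the move-by-move comparison, I would first observe that the odd cycle rule is stated verbatim as part of CCR-$\Zsap$, CCR-$\Zsap^\ell$, and CCR-$\Zsap^+$, so any move made by the odd cycle rule transfers immediately; the content is therefore entirely in the forcing triples. Suppose $\ft{k}{i}{j}$ is a valid forcing triple under CCR-$\Zsap$ with blue non-edge set $B_E$; then $\{j,k\}$ is a white non-edge and $i\to j$ occurs in the local game $\lg{G}{B_E}{k}{Z}$, i.e., in the zero forcing game on $G$ with rule CCR-$Z$ and initial blue set $S:=\lset{G}{B_E}{k}=N_G[k]\cup N_{B_E}(k)$. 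Since $\{j,k\}$ is a non-edge not in $B_E$, the vertex $j$ lies outside $S$, so $j$ starts white and lies in the closure of that game. The key ingredient I would invoke is that, for a fixed graph and a fixed initial blue set $S$, the CCR-$Z$ closure of $S$ is contained in the CCR-$Z_\ell$ closure of $S$, which is in turn contained in the CCR-$Z_+$ closure of $S$; the first containment is immediate because CCR-$Z_\ell$ allows every CCR-$Z$ force. Hence $j$ also lies in the closure of $\lg{G}{B_E}{k}{Z_\ell}$, which has the same initial blue set $S$; being white at the start, $j$ must get forced there, say by $i'\to j$ for some vertex $i'$ (possibly $i'=j$, which is allowed under CCR-$Z_\ell$), so $\ft{k}{i'}{j}$ is a valid forcing triple under CCR-$\Zsap^\ell$ coloring $\{j,k\}$ blue. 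The passage from CCR-$\Zsap^\ell$ to CCR-$\Zsap^+$ is identical, using the second containment.

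The only step that is not purely formal is this set-wise closure containment, in particular $\mathrm{cl}_{Z_\ell}(S)\subseteq\mathrm{cl}_{Z_+}(S)$ for a fixed initial set $S$, which slightly strengthens the numerical inequality $Z_+(G)\le Z_\ell(G)$ recalled in Section~\ref{subsec:zeroforcing}; this is the one place I expect to spend a little care, although it is routine. I would prove it by induction on the number of forces, checking that every CCR-$Z_\ell$ force from a blue set $B$ is reproducible as a CCR-$Z_+$ force: a plain CCR-$Z$ force $u\to v$ survives inside $G[B\cup W]$, where $W$ is the component of $G-B$ containing $v$, because $v$ is still the only white neighbor of $u$ there; and if $v$ is a non-isolated white vertex with no white neighbors, then $\{v\}$ is its own component of $G-B$, and any blue neighbor $u$ of $v$ has $v$ as its unique white neighbor in $G[B\cup\{v\}]$, so $u\to v$ is a CCR-$Z$ force inside $G[B\cup\{v\}]$ and hence a CCR-$Z_+$ force. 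With this in hand the remainder of the argument is bookkeeping.
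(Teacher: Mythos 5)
Your proof is correct and takes essentially the approach the paper leaves implicit: the observation is stated without proof, the intended justification being precisely your move-by-move simulation, namely that the odd cycle rule is common to all three games, that CCR-$Z_\ell$ subsumes CCR-$Z$, and that for a fixed initial blue set the CCR-$Z$, CCR-$Z_\ell$, and CCR-$Z_+$ closures are nested (the same nesting behind $Z_+(G)\leq Z_\ell(G)\leq Z(G)$ recalled in Section~\ref{subsec:zeroforcing}). Your attention to the possible $i\rightarrow i$ forces under CCR-$Z_\ell$ and to reproducing a $Z_\ell$ force as a CCR-$Z$ force inside $G[B\cup W]$ is exactly the detail needed for the inequality $\Zsap^+(G)\leq \Zsap^\ell(G)$.
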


By a proof analogous to that of Theorem \ref{thm:Zsap}, we can establish Theorem \ref{thm:Zsapl+}.  Observe that $\Zsap^\ell(G)=0$ implies $\Zsap^+(G)=0$.
\begin{thm}
\label{thm:Zsapl+}
Let $G$ be a graph.  If $\Zsap^\ell(G)=0$, then every matrix in $\S_\ell(G)$ has the SAP.  If $\Zsap^+(G)=0$, then every matrix in $\S_+(G)$ has the SAP.  Therefore, if $\Zsap^+(G)=0$, then $M_+(G)=\nu(G)$.
\end{thm}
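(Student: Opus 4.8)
The plan is to replay the proof of Theorem~\ref{thm:Zsap} almost verbatim, simply replacing the color change rule and invoking the matching clause of Proposition~\ref{prop:forcing}. Fix $A=\begin{bmatrix}a_{i,j}\end{bmatrix}\in\S_\ell(G)$ for the first statement (and $A\in\S_+(G)$ for the second), pick an order of the non-edges, let $\Psi$ be the associated SAP matrix, and suppose ${\bf x}=(x_e)_{e\in E(\ol{G})}$ satisfies $\Psi{\bf x}={\bf 0}$. As before, I would track how the zero pattern of ${\bf x}$ propagates under CCR-$\Zsap^\ell$ (resp.\ CCR-$\Zsap^+$), using the convention that coloring a non-edge $e$ blue records the fact $x_e=0$, and show that $\Zsap^\ell(G)=0$ (resp.\ $\Zsap^+(G)=0$) forces every $x_e$ to be zero, so that $\Psi$ is full-rank and $A$ has the SAP.

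Concretely, I would re-establish the two claims of Theorem~\ref{thm:Zsap} in the new setting. For the forcing-triple claim: if $B_E$ is the current blue set, $x_e=0$ for all $e\in B_E$, and $\ft{k}{i}{j}$ comes from a force $i\rightarrow j$ in $\lg{G}{B_E}{k}{Z_\ell}$ (resp.\ $\lg{G}{B_E}{k}{Z_+}$), then the rows of $\Psi$ in the $k$-th block give $\sum_{s\notin N_G[k]}x_{\{s,k\}}{\bf v}_s={\bf 0}$, which collapses to $\sum_{s\notin\lset{G}{B_E}{k}}x_{\{s,k\}}{\bf v}_s={\bf 0}$ once the blue coordinates vanish, since $\lset{G}{B_E}{k}=N_G[k]\cup N_{B_E}(k)$ is precisely the initial blue set of the local game; now the CCR-$Z_\ell$ clause (resp.\ CCR-$Z_+$ clause) of Proposition~\ref{prop:forcing} applies because $A\in\S_\ell(G)$ (resp.\ $A\in\S_+(G)$), yielding $x_{\{j,k\}}=0$. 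The one genuinely new point to verify is the behavior of self-forces: CCR-$Z_\ell$ allows $i\rightarrow i$, but then Proposition~\ref{prop:forcing} gives $x_i=0$, i.e.\ $x_{\{i,k\}}=0$, so the bookkeeping still matches; moreover the ``$i$ is not isolated'' hypothesis in CCR-$Z_\ell$ is exactly what guarantees $a_{i,i}\neq 0$ for $A\in\S_\ell(G)$, which is what makes that clause of Proposition~\ref{prop:forcing} usable. The odd-cycle claim carries over with no change at all, since the odd cycle rule is shared by all three SAP rules and its justification only uses the block structure from Definition~\ref{defn:system} together with Lemma~\ref{lem:oc}, and the latter needs only $a_{i,k_s}\neq 0$ for the vertices $k_s$ of the cycle $C\subseteq\ol{G}_W[N_G(i)]$, which holds for every $A\in\S(G)$ and in particular for $A\in\S_\ell(G)$ or $A\in\S_+(G)$.

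Putting the two claims together, $\Zsap^\ell(G)=0$ (resp.\ $\Zsap^+(G)=0$) forces $x_e=0$ for all non-edges $e$, so ${\bf 0}$ is the only vector in the right kernel of $\Psi$; hence $\Psi$ is full-rank and $A$ has the SAP, and since $A$ was arbitrary in $\S_\ell(G)$ (resp.\ $\S_+(G)$), both implications follow. For the last sentence, if $\Zsap^+(G)=0$ then every $A\in\S_+(G)$ has the SAP, so the SAP restriction in the definition of $\nu(G)$ is vacuous and $\nu(G)=\max\{\nul(A):A\in\S_+(G)\}=M_+(G)$. I do not expect a substantial obstacle here: the real content is the observation that each ingredient of the proof of Theorem~\ref{thm:Zsap} has a direct $\ell$- and $+$-analogue, and the only step needing care is confirming that the hypotheses $A\in\S_\ell(G)$ and $A\in\S_+(G)$ are exactly what the CCR-$Z_\ell$ and CCR-$Z_+$ clauses of Proposition~\ref{prop:forcing} require, while the possibility of self-forces under CCR-$Z_\ell$ is harmless.
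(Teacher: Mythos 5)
Your proposal is correct and is exactly the argument the paper intends: the paper proves Theorem~\ref{thm:Zsapl+} only by declaring it ``analogous to Theorem~\ref{thm:Zsap},'' and you carry out that analogy faithfully, replacing CCR-$Z$ by CCR-$Z_\ell$ or CCR-$Z_+$ in the local games, invoking the matching clause of Proposition~\ref{prop:forcing} for $A\in\S_\ell(G)$ or $A\in\S_+(G)$, noting that self-forces $i\rightarrow i$ are handled by the same bookkeeping, and observing that the odd cycle rule and Lemma~\ref{lem:oc} need no modification. The final deduction $\nu(G)=M_+(G)$ from every $A\in\S_+(G)$ having the SAP is also as in the paper.
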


\begin{cor}
Suppose $G$ is a graph with $\Zsap^+(G)=0$.  Then $\xi(G)\geq M_+(G)$.
\end{cor}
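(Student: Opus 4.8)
The plan is to route the desired inequality through the intermediate parameter $\nu(G)$. The hypothesis $\Zsap^+(G)=0$ is precisely the situation handled by Theorem~\ref{thm:Zsapl+}, whose last sentence already records that $M_+(G)=\nu(G)$ under this assumption. So the first step is simply to invoke that theorem, which reduces the claim to proving the unconditional inequality $\nu(G)\leq\xi(G)$.

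For the second step I would observe that $\nu(G)\leq\xi(G)$ holds for every graph, with no hypothesis at all, just by comparing the families of matrices involved. Indeed, every positive semidefinite matrix is in particular a real symmetric matrix, so $\S_+(G)\subseteq\S(G)$; moreover the Strong Arnold Property is a condition that makes sense for an arbitrary symmetric matrix, so any $A\in\S_+(G)$ that has the SAP is also an element of $\S(G)$ that has the SAP. Hence the set of matrices over which $\xi(G)=\max\{\nul(A):A\in\S(G),\ A\text{ has the SAP}\}$ is maximized contains all matrices over which $\nu(G)$ is maximized, and therefore $\nu(G)\leq\xi(G)$. Combining the two steps yields $M_+(G)=\nu(G)\leq\xi(G)$, as desired.

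There is essentially no obstacle here: the substantive content is entirely contained in Theorem~\ref{thm:Zsapl+}, and the only point requiring a (brief) word of care is the unconditional comparison $\nu(G)\leq\xi(G)$, which comes from the containment $\S_+(G)\subseteq\S(G)$ of matrix families rather than from the zero forcing hypothesis.
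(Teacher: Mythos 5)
Your proposal is correct and matches the argument the paper intends (the corollary follows immediately from Theorem~\ref{thm:Zsapl+} together with the containment $\S_+(G)\subseteq\S(G)$, which gives $\nu(G)\leq\xi(G)$ unconditionally). Nothing further is needed.
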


\begin{ex}
Let $G=K_{n_1,n_2,\ldots ,n_p}$ be a complete multi-partite graph with $n_1\geq n_2\geq \cdots \geq n_p$ and $p\geq 2$.  Denote $n=\sum_{t=1}^pn_t$.  Then $\Zsap^\ell(G)=\Zsap^+(G)=0$, so $\nu(G)=M_+(G)=n-n_1$ \cite{HLA46}.  On the other hand, if $n_1\geq 4$, then $\Zsap(G)> 0$, since none of the non-edges in this part can be colored.
\end{ex}

\begin{ex}
If $T$ is a tree, then $\Zsap^+(T)=0$.  However, not every tree $T$ has $\Zsap^\ell(T)=0$.  For example, let $G$ be the graph obtained from $K_{1,4}$ by attaching four leaves to the four existing leaves.  In this graph, only the non-edges incident to the center vertex can be colored by CCR-$\Zsap^\ell$, so $\Zsap^\ell(G)> 0$.

\end{ex}\

\subsection{Graph join}
\label{sec:graphjoin}

Since the SAP zero forcing process uses a propagation on non-edges, it is interesting to consider $\Zsap(G)$ if $\ol{G}$ has two or more components; that is, $G$ is a join of two or more graphs.

\begin{prop}
Let $G$ and $H$ be two graphs.  Then 
\[\Zsap(G\vee H)=\Zsap(G\vee K_1)+\Zsap(H\vee K_1).\]
\end{prop}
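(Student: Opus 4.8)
The plan is to exploit that $\ol{G\vee H}=\ol{G}\dunion\ol{H}$: every non-edge of $G\vee H$ lies inside $V(G)$ or inside $V(H)$, and there is no non-edge between the two parts. Write $u_G$, $u_H$ for the apex vertices of $G\vee K_1$, $H\vee K_1$; every non-edge of $G\vee K_1$ lies inside $V(G)$, and likewise for $H\vee K_1$. The core of the argument is the following \emph{decoupling claim}: running CCR-$\Zsap$ on $G\vee H$ from a blue non-edge set $B_E$, the set of non-edges inside $V(G)$ that eventually turn blue depends only on $B_E^G:=B_E\cap\binom{V(G)}{2}$ and equals the set that turns blue when CCR-$\Zsap$ is run on $G\vee K_1$ from $B_E^G$; symmetrically for $V(H)$ and $B_E^H$. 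Granting this, a non-edge set $B_E=B_E^G\dunion B_E^H$ is a zero forcing set for $G\vee H$ if and only if $B_E^G$ is one for $G\vee K_1$ and $B_E^H$ is one for $H\vee K_1$; hence from optimal zero forcing sets $B^G$ of $G\vee K_1$ and $B^H$ of $H\vee K_1$ the disjoint union $B^G\dunion B^H$ forces all of $G\vee H$, giving $\Zsap(G\vee H)\le\Zsap(G\vee K_1)+\Zsap(H\vee K_1)$, while the two halves of an optimal zero forcing set of $G\vee H$ are zero forcing sets of $G\vee K_1$ and $H\vee K_1$, giving the reverse inequality.

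To prove the decoupling claim I would inspect each clause of CCR-$\Zsap$. \emph{Independence.} If $\ft{k}{i}{j}$ is a forcing triple producing a non-edge inside $V(G)$, then $j,k\in V(G)$, and the relevant local game $\lg{G\vee H}{B_E}{k}{Z}$ has initial blue set $N_{G\vee H}[k]\cup N_{B_E}(k)=\bigl(N_G[k]\cup V(H)\bigr)\cup N_{B_E}(k)$, where $N_{B_E}(k)\subseteq V(G)$ since every non-edge through $k$ lies inside $V(G)$; its subsequent play is an ordinary zero forcing game on $G\vee H$, making no further reference to $B_E$. If the odd cycle rule $(i\rightarrow C)$ produces a non-edge inside $V(G)$, then $C$ lies in $V(G)$, and since for $i\in V(G)$ the graph $\ol{(G\vee H)}_W[N_{G\vee H}(i)]$ is the disjoint union of $\ol{G}_W[N_G(i)]$ and $\ol{H}_W[V(H)]$ (and for $i\in V(H)$ the disjoint union of $\ol{G}_W[V(G)]$ and $\ol{H}_W[N_H(i)]$), the components of it lying in $V(G)$ are determined solely by the currently white non-edges inside $V(G)$. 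Thus every rule acting on non-edges inside $V(G)$ has inputs and outputs depending only on the coloring of non-edges inside $V(G)$, and symmetrically for $V(H)$, so the two families evolve without interaction.

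\emph{Matching with $G\vee K_1$.} For a forcing triple $\ft{k}{i}{j}$ with $j,k\in V(G)$, compare $\lg{G\vee H}{B_E}{k}{Z}$ with $\lg{G\vee K_1}{B_E^G}{k}{Z}$: in the first all of $V(H)$ is blue from the start, in the second $u_G$ is, and in either game these extra blue vertices are adjacent to every vertex of $V(G)$ and to no white vertex (the white set stays inside $V(G)$). Hence a vertex of $V(G)$ performs a force exactly when it does in the zero forcing game on $G$ with initial blue set $N_G[k]\cup N_{B_E^G}(k)$, while an extra blue vertex can force only the single last white vertex of $V(G)$; so the two local games turn the same vertices of $V(G)$ blue, and the same forcing triples on non-edges inside $V(G)$ become available. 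For the odd cycle rule, when $i\in V(G)$ the odd-cycle components of $\ol{(G\vee H)}_W[N_{G\vee H}(i)]$ lying in $V(G)$ are exactly those of $\ol{G}_W[N_G(i)]$, which are the ones available to $i$ in $G\vee K_1$; when $i\in V(H)$ they are the odd-cycle components of $\ol{G}_W[V(G)]$, which are exactly those available to $u_G$ in $G\vee K_1$, and conversely any such component can be fired in $G\vee H$ by choosing any $i\in V(H)$ (which exists since $H$ is nonempty). So the $V(G)$-part of the process on $G\vee H$ and the process on $G\vee K_1$ reach the same colorings, and symmetrically for $V(H)$; this proves the decoupling claim.

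I expect the main obstacle to be exactly this matching step: verifying that a single apex vertex reproduces every way the whole opposite part $V(H)$ can influence the $V(G)$-side — both the ``grab the last white vertex'' moves buried inside local games and the odd-cycle firings on cycles lying in $\ol{G}_W[V(G)]$ — and that $G\vee H$ offers nothing genuinely new beyond $G\vee K_1$ there. Once that bookkeeping is settled, the independence observation and the counting above finish the proof. (As usual, $G$ and $H$ are assumed nonempty so that $G\vee K_1$ and $H\vee K_1$ really have apex vertices.)
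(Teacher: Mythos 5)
Your proof is correct and follows essentially the same route as the paper: the paper also reduces to showing that $B_E$ is a zero forcing set for $G\vee H$ if and only if its restrictions to $E(\ol{G})$ and $E(\ol{H})$ are zero forcing sets for $G\vee K_1$ and $H\vee K_1$, translating each forcing triple and odd-cycle firing by collapsing all of $V(H)$ to the apex vertex (and lifting the apex back to an arbitrary fixed vertex of $V(H)$). Your explicit matching of the local games (all of $V(H)$ being initially blue and able to force only a last remaining white vertex, just like the apex) is the same mechanism, merely spelled out in more detail than the paper's $\pi$, $\pi^{-1}$ correspondence.
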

\begin{proof}
Let $v$ be the vertex corresponding to the $K_1$ in $G\vee K_1$.  Denote $E_1=E(\ol{G})$ and $E_2=E(\ol{H})$.  Consider the mapping $\pi: V(G\vee H)\rightarrow V(G\vee K_1)$ such that $\pi(i)=i$ if $i\in V(G)$ and $\pi(i)=v$ if $i\in V(H)$.  Fix a vertex $u\in V(H)$, consider the mapping $\pi^{-1}: V(G\vee K_1)\rightarrow V(G\vee H)$ such that $\pi^{-1}(i)=i$ if $i\in V(G)$ and $\pi^{-1}(v)=u$.

Suppose at some stage $B_E$ is the set of blue non-edges in $G\vee H$, and $B_E\cap E_1$ and $B_E\cap E_2$ are the sets of blue non-edges in $G\vee K_1$ and $H\vee K_1$ respectively.  Let $e=\{j,k\}\in E_1$.  If $\ft{k}{i}{j}$ happens in $G\vee H$, then $\ft{k}{\pi(i)}{j}$ can be applied in $G\vee K_1$; if $\ft{k}{i}{j}$ happens in $G\vee K_1$, then $\ft{k}{\pi^{-1}(i)}{j}$ can be applied in $G\vee H$.  Also, if $e$ is in some cycle $C$ and $(i\rightarrow C)$ happens in either $G\vee H$ or $G\vee K_1$, then by the definition of the odd cycle rule $C$ must totally fall in $V(G)$.  If $(i\rightarrow C)$ in $G\vee H$, then $(\pi(i)\rightarrow C)$ in $G\vee K_1$; if $(i\rightarrow C)$ in $G\vee K_1$, then $(\pi^{-1}(i)\rightarrow C)$ in $G\vee H$.  Similarly, all these correspondences work when $e\in E_2$.

Therefore, we can conclude that $B_E$ is a ZFS-$\Zsap$ in $G\vee H$ if and only if $B_E\cap E_1$ and $B_E\cap E_2$ are ZFS-$\Zsap$ in $G\vee K_1$ and $H\vee K_1$ respectively.
\end{proof}

\begin{ex}
\label{ex:highZsap}
The value of $\Zsap(G\vee K_1)$ and the value of $\Zsap(G)$ can vary a lot.  For example, when $G=\ol{K_n}$, we will show that $\Zsap(\ol{K_n})={n\choose 2}$ and $\Zsap(\ol{K_n}\vee K_1)=\Zsap(K_{1,n})={n-1\choose 2}-1$ when $n\geq 3$.

Since there are no edges in $\ol{K_n}$, no vertex can make a force in any local game.  This means $\Zsap(\ol{K_n})={n\choose 2}$.  

For $K_{1,n}$, color some edges $B_E$ of $\ol{K_{1,n}}$ blue so that the set of white non-edges forms a $3$-cycle with $n-3$ leaves attaching to a vertex of the $3$-cycle.  Then $B_E$ is a ZFS-$\Zsap$ for $K_{1,n}$, since the $n-3$ leaves can be colored by forcing triples, and then the $3$-cycle can be colored by the odd cycle rule.  Therefore, $\Zsap(K_{1,n})\leq {n-1\choose 2}-1$.

Conversely, suppose $B_E$ is a ZFS-$\Zsap$ of $K_{1,n}$ with $|B_E|={n-1\choose 2}-2$.  Let $\ol{G}_W$ be the graph whose edges are the white non-edges.  Then $|E(\ol{G}_W)|=n+1$.  Obtain a subgraph $H$ of $\ol{G}_W$ by deleting leaves and isolated vertices repeatedly until there is no leaf left.  Thus $H$ has minimum degree at least two.  Since deleting a leaf removes an edge and a vertex, $|V(H)|+1\leq |E(H)|$.  This means $H$ must contain a component that is not a cycle (so in particular not an odd cycle).  Since this component has minimum degree at least two, none of its edge can be colored, a contradiction.  Hence $\Zsap(K_{1,n})={n-1\choose 2}-1$.
\end{ex}

\begin{prop}
\label{prop:noiso}
For any graph $G$, $\Zsap(G\vee K_1)\leq \Zsap(G)$.  If $G$ contains no isolated vertices, then $\Zsap(G\vee K_1)=\Zsap(G)$.
\end{prop}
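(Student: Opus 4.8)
The plan is to exploit that $\ol{G\vee K_1}$ is $\ol G$ together with one isolated vertex, so $G\vee K_1$ and $G$ have exactly the same non-edges; write $v$ for the vertex of the added $K_1$. Since $v$ is adjacent to every vertex of $G$, it lies in no non-edge, and in every local game $\lg{G\vee K_1}{B_E}{k}{Z}$ it belongs to the initial blue set. The two inequalities are proved by replaying CCR-$\Zsap$ runs move by move between the two graphs.

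For $\Zsap(G\vee K_1)\leq\Zsap(G)$ I would take a minimum ZFS-$\Zsap$ $B_E$ of $G$ and show it is one for $G\vee K_1$. A CCR-$Z$ force between vertices of $V(G)$ inside a local game stays valid after adjoining the blue vertex $v$, the only new neighbour of each vertex; hence the local game on $G\vee K_1$ colours at least every vertex its counterpart on $G$ colours, so every forcing triple $\ft{k}{i}{j}$ of a winning $G$-run is available in $G\vee K_1$. For an odd cycle move $(i\to C)$ with $i\in V(G)$, the graph $\ol{(G\vee K_1)}_W[N_{G\vee K_1}(i)]$ is just $\ol G_W[N_G(i)]$ with $v$ added as an isolated vertex, so the same component $C$ can be used. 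Thus a winning run on $G$ transfers step for step to $G\vee K_1$.

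For the reverse inequality when $G$ has no isolated vertices I would take a minimum ZFS-$\Zsap$ $B_E$ of $G\vee K_1$ and mirror a winning run inside $G$. Odd cycle moves centred at a vertex of $V(G)$ transfer exactly as above. Forcing triples transfer because, given the same set of blue non-edges, the local games centred at a vertex $k$ on $G\vee K_1$ and on $G$ have the same derived colouring on $V(G)$: the extra vertex $v$ can force a vertex $q$ only once all of $V(G)\setminus\{q\}$ is blue, and then, as $G$ has no isolated vertices, a genuine $G$-neighbour of $q$ performs the same force; the clean way to record this is an induction on forces showing that the local game on $G\vee K_1$ never colours a vertex of $V(G)$ outside the derived set of the local game on $G$, using that the latter is closed under CCR-$Z$. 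The only genuinely new move that remains is $(v\to C)$, where $C=c_1c_2\cdots c_{2t+1}$ is an odd-cycle component of the whole white-non-edge graph $\ol G_W$.

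Realising $(v\to C)$ inside $G$ is the crux, and I would handle it by a dichotomy. If some $w\in V(G)$ is adjacent in $G$ to every vertex of $C$, then since restriction to a subset of vertices cannot add edges to a component, $C$ is still a component of $\ol G_W[N_G(w)]$, so $(w\to C)$ applies in $G$ and colours all of $C$. Otherwise I would colour $C$ one edge at a time: for a cycle vertex $c_\ell$ the only white non-edges at $c_\ell$ are the two cycle edges $\{c_{\ell-1},c_\ell\}$ and $\{c_\ell,c_{\ell+1}\}$, so the local game on $G$ centred at $c_\ell$ has $c_{\ell-1}$ and $c_{\ell+1}$ as its only white vertices; if one of them is forced, the other becomes the unique white vertex and is forced by any of its $G$-neighbours, so both cycle edges at $c_\ell$ turn blue, after which the local game centred at $c_{\ell+1}$ has $c_{\ell+2}$ as its sole white vertex, so one cascades around $C$. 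The bridge between the two cases is that if no local game at a cycle vertex colours anything, then for every $\ell$ the vertices $c_{\ell-1}$ and $c_{\ell+1}$ have the same $G$-neighbourhood off $\{c_{\ell-1},c_{\ell+1}\}$; since $2t+1$ is odd, stepping by two around $C$ propagates this so that any $G$-neighbour $w$ of $c_1$ is adjacent to all of $V(C)$ --- and $w$ cannot itself lie on $C$, for otherwise the propagation would turn a cycle edge into a $G$-edge --- which puts us back in the first case. I expect this twin-propagation around the odd cycle, together with the precise statement of the cascade, to be the delicate part; the transfer of all the other moves is routine.
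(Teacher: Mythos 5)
Your proof is correct, and on the forcing-triple moves it takes essentially the paper's route: a force by the apex $v$ inside a local game $\lg{G\vee K_1}{B_E}{k}{Z}$ can occur only when a single white vertex $j$ remains, and since $G$ has no isolated vertices a genuine $G$-neighbour $i'$ of $j$ makes the same force, so $\ft{k}{i'}{j}$ colours $\{j,k\}$ in $G$; your induction on the forces inside the local game is just a more careful record of that observation. Where you genuinely go beyond the paper is the odd cycle rule. The paper's argument for the reverse inequality discusses only forcing triples; it never addresses the move $(v\rightarrow C)$, which is available in $G\vee K_1$ whenever the whole white graph $\ol{G}_W$ has an odd-cycle component $C$ (because $N_{G\vee K_1}(v)=V(G)$) and has no verbatim counterpart in $G$. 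Your dichotomy closes exactly this case: either some $w\in V(G)$ is $G$-adjacent to all of $V(C)$, in which case $C$ stays a component of $\ol{G}_W[N_G(w)]$ (restriction to an induced subgraph cannot add edges) and $(w\rightarrow C)$ works in $G$; or some local game centred at a cycle vertex makes a force, and the cascade then colours the cycle edges one at a time using only that $G$ has no isolated vertices. Your bridge between the cases is sound, and is cleanest in the order: if no cycle-centred local game forces, then for each $\ell$ every vertex outside $\{c_{\ell-1},c_{\ell+1}\}$ is adjacent to both or to neither of them; since the distance-two pairs form a single odd cycle through $V(C)$, a cycle vertex would have to be adjacent to all or to none of the remaining cycle vertices, and adjacency to a cycle neighbour is impossible, so cycle vertices have no $G$-neighbours on $C$; hence any $G$-neighbour $w$ of $c_1$ (which exists) lies off $C$, the propagation applies to every pair, and $w$ is adjacent to all of $V(C)$. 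So your proposal not only proves the proposition but supplies the argument for a case the paper's proof passes over in silence.
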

\begin{proof}
Every ZFS-$\Zsap$ for $G$ is a ZFS-$\Zsap$ for $G\vee K_1$, so $\Zsap(G\vee K_1)\leq \Zsap(G)$.

Now consider the case that $G$ has no isolated vertices.  Suppose at some stage $B_E$ is the set of blue non-edges for both $G\vee K_1$ and $G$.  We claim that if a non-edge $\{j,k\}\in E(\ol{G})$ is colored in $G\vee K_1$, then it can also be colored in $G$.  

Label the vertex in $V(K_1)$ as $v$.  If $\ft{k}{i}{j}$ in $G\vee K_1$ with $i\neq v$, then it is also a forcing triple in $G$.  Suppose $\ft{k}{v}{j}$ happens in $G\vee K_1$.  Then it must be the case when $j$ is the only white vertex in $\lg{G\vee K_1}{B_E}{k}{Z}$, since $v$ is a vertex that is adjacent to every vertex and it cannot make a force unless every vertex except $j$ is already blue.  Since $j$ is not an isolated vertex, it has a neighbor $i'$ in $V(G)$.  Now $\ft{k}{i'}{j}$ can make $\{j,k\}$ blue.  Therefore, $\Zsap(G\vee K_1)= \Zsap(G)$.

\end{proof}

\begin{prop}
\label{prop:GvK1}
Let $G$ be a graph.  Then $\Zsap(G\vee K_1)=0$ if and only if one of following holds:
\begin{itemize}
\item $G$ has no isolated vertices and $\Zsap(G)=0$.
\item $G=K_1$ or $G$ is a disjoint union of a connected graph $H$ and an isolated vertex such that $\Zsap(H)=0$.
\item $G=\ol{K_3}$.
\end{itemize}
\end{prop}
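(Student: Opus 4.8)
The plan is to prove both implications, in each case exploiting the fact that the apex vertex $v$ of $G\vee K_1$ and the vertices isolated in $G$ are the only ``bottlenecks'' for the forcing process.

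\emph{The ``if'' direction.} The first bullet is Proposition~\ref{prop:noiso}; if $G=K_1$ then $G\vee K_1=K_2$ has no non-edge; and if $G=\ol{K_3}$ then $G\vee K_1=K_{1,3}$, for which $\Zsap=\binom{2}{2}-1=0$ by Example~\ref{ex:highZsap}. The substantive case is $G=H\dunion\{w\}$ with $H$ connected and $\Zsap(H)=0$; here $G\vee K_1$ is $H\vee K_1$ with a pendant vertex $w$ attached to the apex $v$. First I color every non-edge of $H$ by replaying a witness for $\Zsap(H)=0$: that witness uses only forces among vertices of $H$, and in the corresponding local game inside $G\vee K_1$ the extra vertex $v$ is always blue and adjacent to everything, so it never obstructs a force, while the pendant $w$ stays white throughout this phase and can interfere only with forces made by $v$, which are never used; hence every forcing triple and every odd-cycle move of the witness transfers verbatim. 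Once every non-edge of $H$ is blue, for each $x\in V(H)$ the local game $\lg{G\vee K_1}{B_E}{x}{Z}$ has all of $V(H)\cup\{v\}$ blue (because $\{x,y\}$ is blue for every non-neighbor $y$ of $x$ in $H$), so $w$ is its unique white vertex and $v\to w$; thus $\ft{x}{v}{w}$ colors $\{w,x\}$. All non-edges get colored, so $\Zsap(G\vee K_1)=0$.

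\emph{The ``only if'' direction.} Assume $\Zsap(G\vee K_1)=0$, let $W$ be the set of vertices isolated in $G$, and let $G''=G-W$. If $W=\emptyset$, Proposition~\ref{prop:noiso} gives $\Zsap(G)=0$ (first bullet). If $G''=\emptyset$, then $G\vee K_1=K_{1,|W|}$, so Example~\ref{ex:highZsap} forces $|W|\le 3$, yielding $G=K_1$, $G=\ol{K_2}$ (a $K_1$ plus an isolated vertex, with $\Zsap(K_1)=0$), or $G=\ol{K_3}$. So suppose $W\neq\emptyset\neq G''$; since $G''$ has no isolated vertex, $|V(G'')|\ge 2$. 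Each $w\in W$ is adjacent in $G\vee K_1$ only to $v$ and has degree $(|W|-1)+|V(G'')|$ in $\ol G$. Consider the first edge of $\ol{G\vee K_1}$ incident with $W$ that ever gets colored: at that moment every such edge is white, so each $w\in W$ still has its full degree $(|W|-1)+|V(G'')|$ in the current white-non-edge graph. If $|W|\ge 2$, one checks that no move can color this edge: the odd-cycle rule would need $i=v$, putting $w$ on an odd cycle that is a whole component and hence forcing $\deg w=2<(|W|-1)+|V(G'')|$; a forcing triple centered at a vertex of $W$ starts from only $\{w,v\}$ blue, which cannot propagate into $V(G'')$; and a forcing triple centered elsewhere would need $v$ to force a vertex of $W$, which requires every other vertex of $W$ and all of $V(G'')$ to be blue already. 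Since such edges exist, this contradicts $\Zsap(G\vee K_1)=0$; hence $|W|=1$, and we write $G=G''\dunion\{w\}$ with $w$ the single isolated vertex.

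Now an identical degree/bottleneck argument — with $v$ again the only vertex adjacent to vertices of two different components — shows that if $G''$ were disconnected, no non-edge joining two of its components could ever be colored, so $G''$ is connected. It remains to prove $\Zsap(G'')=0$; since $G''$ has no isolated vertex it suffices, by Proposition~\ref{prop:noiso}, to show $\Zsap(G''\vee K_1)=0$, where $G''\vee K_1=(G\vee K_1)-w$. I would replay a winning $G\vee K_1$-process inside $G''\vee K_1$, keeping the invariant that the $G''\vee K_1$-state is always the $G''$-restriction of the $G\vee K_1$-state: moves coloring only edges at $w$ are skipped; forcing triples and odd-cycle moves not involving $w$ are copied directly, since deleting the pendant $w$ (which is only ever a white leaf at $v$ or a redundant blue vertex) never destroys a force nor an odd-cycle component; and an odd-cycle move $(v\to C)$ with $w\in C$ — which colors two edges at $w$ together with a path $x_1x_2\cdots x_{2m}$ of non-edges of $G''$ — is replaced by the forcing triples $\ft{x_1}{v}{x_2},\ \ft{x_2}{v}{x_3},\ \dots,\ \ft{x_{2m-1}}{v}{x_{2m}}$ performed in turn. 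This last step works because, when $(v\to C)$ is available, that path is a whole component of the white-non-edge graph of $G''$, so at each step the closed $G''$-neighborhood of $x_i$ together with its already-colored non-edges is all of $V(G'')$ except $x_{i+1}$, whence $v$ forces $x_{i+1}$ in the relevant local game. The replay colors every non-edge of $G''$, so $\Zsap(G''\vee K_1)=0$, completing the argument.

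I expect the crux to be this final replay, above all the treatment of an odd-cycle move through $w$: one must notice that the hypothesis of that move forces the affected non-edges of $G''$ into an isolated path, which can then be re-derived one at a time without $w$; the supporting fact that deleting the pendant $w$ never breaks a force should be isolated as a small lemma. The several subcase checks in the ``only if'' direction — that no edge at $W$, and no cross-edge between components of $G''$, can ever be colored — are routine but numerous and must be done carefully, and the ``if'' direction's replay quietly uses the (easy) observation that a witness for $\Zsap(H)=0$ never needs the apex of $H\vee K_1$ as a forcer.
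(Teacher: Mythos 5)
Your argument is correct, and much of it parallels the paper's: the forward direction is essentially the paper's (Proposition \ref{prop:noiso}, the trivial checks for $K_1$ and $\ol{K_3}$, and the two-phase replay for a connected $H$ plus an isolated vertex), and your bottleneck arguments play the role of the paper's analysis of crossing non-edges. The genuine differences are two. First, you split cases by isolated vertices and dispose of edgeless $G$ via the exact value $\Zsap(K_{1,n})={n-1\choose 2}-1$ from Example \ref{ex:highZsap}, whereas the paper splits on the number of components of $G$ and extracts $\ol{K_3}$ from the fact that with at least three components only the odd cycle rule can color a crossing non-edge. Second, and more substantially, in the converse you prove $\Zsap(G''\vee K_1)=0$ by replaying the winning $G\vee K_1$ process inside $G''\vee K_1$ and only then pass to $\Zsap(G'')=0$ via Proposition \ref{prop:noiso}, while the paper replays directly inside $H=G''$, replacing a force $\ft{k}{v}{j}$ by $\ft{k}{i'}{j}$ for some neighbor $i'$ of $j$ using connectivity of $H$. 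Your route keeps the apex $v$ available, so $v$-centered forcing triples and $v$-centered odd-cycle moves avoiding $w$ transfer verbatim, and the one delicate case, a $v$-centered odd cycle through $w$, is correctly converted into the chain $\ft{x_1}{v}{x_2},\ldots,\ft{x_{2m-1}}{v}{x_{2m}}$ along the leftover path (which is indeed an isolated path in the white graph of $G''$, so each local game has a unique white vertex). The paper's version is shorter; yours handles the odd cycle rule in the replay more explicitly, since the paper's substitution argument is phrased only for forcing triples and a universal forcer is no longer present once $v$ is discarded.

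Two spots should be written out rather than asserted. In excluding $|W|\geq 2$, the subcase where $v$ would force a vertex of $W$ still needs its one-line completion: the other vertices of $W$ are initially white in that local game and can be forced only by $v$, whose first force into $W$ would already require the rest of $W$ to be blue, which is impossible. More importantly, the claim that disconnected $G''$ dies by an ``identical'' argument is not literally identical: the forcing-triple part is, but ruling out the odd cycle rule for the first colored crossing non-edge needs an extra observation, for instance that at that moment all crossing non-edges are white, so they form a connected complete multipartite subgraph (all parts of size at least $2$) of the putative cycle component, and such a graph has maximum degree at most $2$ only if it is $K_{2,2}=C_4$, which is even; alternatively, first verify that no non-edge at $w$ can be colored before a crossing one and then run your degree count. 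Both fixes are routine, and with them your proof is complete.
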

\begin{proof}
Let $v$ be the vertex in $V(K_1)\subseteq V(G\vee K_1)$.  In the case that $G$ has no isolated vertices, $\Zsap(G\vee K_1)=0$ if and only if $\Zsap(G)=0$ by Proposition \ref{prop:noiso}.  If $G=K_1$, then $\Zsap(K_2)=0$.  If $G=\ol{K_3}$, then $\Zsap(K_{1,3})=0$.  Finally, suppose $G$ is a disjoint union of a connected graph $H$ and an isolated vertex $w$ such that $\Zsap(H)=0$.  Then every forcing triple in $H$ can work in $G\vee K_1$ to make all non-edges in $H$ blue.  After that, $\ft{k}{v}{w}$ takes action in $G\vee K_1$ for every $k\in V(H)$.  Thus, every non-edge in $G\vee K_1$ is blue.  

For the converse statement, suppose $\Zsap(G\vee K_1)=0$ and no initial blue non-edge is given for $G\vee K_1$.  Suppose $G$ has $p$ components with vertex sets $V_1, V_2, \ldots ,V_p$.  Call a non-edge with two endpoints in different components in $G$ as a crossing non-edge.  We claim that if $p\geq 3$, then no crossing non-edge can turn blue in $G\vee K_1$ by any forcing triples.  Let $\{j,k\}$ be a crossing non-edge.  Without loss of generality, let $k\in V_1$ and $j\in V_2$.  Suppose at some stage $B_E$ is the set of blue non-edges and none of the crossing non-edges is blue.  In the local game $\lg{G\vee K_1}{B_E}{k}{Z}$, all blue vertices are contained in $V_1\cup\{v\}$, since all the crossing non-edges are white.   If $\ft{k}{i}{j}$ happens in $G\vee K_1$, it must be the case that $i=v$, since $v$ is the only blue neighbor of $j$ in $\lg{G\vee K_1}{B_E}{k}{Z}$.  Pick a vertex $u\in V_3$.  Since both $j$ and $u$ are white neighbors of $v$ in $\lg{G\vee K_1}{B_E}{k}{Z}$, it is impossible that $\ft{k}{i}{j}$ is a forcing triple.  In conclusion, if $\Zsap(G\vee K_1)=0$ and $G$ contains at least three components, the odd cycle rule must be applied to the crossing non-edges.  Therefore, $G$ must be $\ol{K_3}$ in this case.  

If $G$ has only one component, then $G$ contains no isolated vertices, unless $G=K_1$.  Otherwise assume $G$ has an isolated vertex and has exactly two components.  Then $G$ must be a disjoint union of a connected graph $H$ and an isolated vertex $w$.  Now we build a sequences of forces for $H$ according to the forces in $G\vee K_1$.  Suppose $\ft{k}{i}{j}$ happens in $G\vee K_1$ with $j,k\in V(H)$.  If $i\in V(H)$, then $\ft{k}{i}{j}$ also works in $H$.  If $i\notin V(H)$, then it must be $\ft{k}{v}{j}$.  But $v$ is adjacent to every vertex, so in $\lg{G}{B_E}{k}{Z}$ every vertex except $j$ must be blue.  Since $H$ is connected, there must be a vertex $i'$ that is adjacent to $j$.  Thus, $\ft{k}{i'}{j}$ can make $\{j,k\}$ blue.  Therefore, if $\Zsap(G\vee K_1)=0$, then $\Zsap(H)=0$.
\end{proof}

\subsection{Computational results for small graphs}
\label{sec:simulation}
Table \ref{tbl:Zsap} shows the proportions of graphs that have certain parameters equal to 0, over all connected graphs with a fixed number of vertices.  Graphs are not labeled and isomorphic graphs are considered as the same.  The computation is done by \textit{Sage} and the code can be found in \cite{SageZsap}.

\begin{table}[h]
\begin{center}
    \begin{tabular}{c|ccc}
    $n$ & $\Zsap=0$ & $\Zsap^\ell=0$ & $\Zsap^+=0$ \\ \hline
    1   & 1.0       & 1.0            & 1.0         \\
    2   & 1.0       & 1.0            & 1.0         \\
    3   & 1.0       & 1.0            & 1.0         \\
    4   & 1.0       & 1.0            & 1.0         \\
    5   & 0.86      & 0.95           & 0.95        \\
    6   & 0.79      & 0.92           & 0.92        \\
    7   & 0.74      & 0.89           & 0.89        \\
    8   & 0.73      & 0.88           & 0.88        \\
    9   & 0.76      & 0.89           & 0.89        \\
    10  & 0.79      & 0.90           & 0.91        \\
    \end{tabular}
\caption{The proportion of graphs satisfies $\zeta(G)=0$ over all connected graphs on $n$ vertices.}
\label{tbl:Zsap}
\end{center}
\end{table}

In Section \ref{sec:xi}, we apply these results to help compute the value of $\xi(G)$ when $|G|\leq 7$.

\section{A vertex cover version of the SAP zero forcing game}
\label{sec:Zvc}

As Example \ref{ex:highZsap} points out, for a connected graph $G$ on $n$ vertices, the value of $\Zsap(G)$ can be much higher than $n$.  This section considers a vertex cover version of the SAP zero forcing game.  That is, if $B$ is a set of vertices, then consider the \textit{complementary closure} $\ccl(B)$ as all those non-edges that are incident to any vertex in $B$.  Now instead of picking some non-edges as blue at the beginning, we pick a set of vertices $B$, and color the set $\ccl(B)$ blue initially.  

Following this idea, a new parameter $\Zvc(G)$ is defined with $0\leq \Zvc(G)\leq n$, and Theorem \ref{thm:Zvc} shows that $M(G)-\Zvc(G)\leq \xi(G)$.

\begin{defn}
For a graph $G$, the parameter $\Zvc(G)$ is the minimum number of vertices $B$ such that by coloring $\ccl(B)$ blue, every non-edge will become blue by repeated applications of CCR-$\Zsap$ with the restriction 
\begin{itemize}
\item $\ft{k}{i}{j}$ cannot perform a force if $i\in B$ and $\{i,k\}\in E(\ol{G})$. 
\end{itemize}
A set $B\subseteq V(G)$ with this property is called a $\Zvc$ zero forcing set.
\end{defn}

\begin{thm}
\label{thm:Zvc}
Let $G$ be a graph.  Then 
\[M(G)-\Zvc(G)\leq \xi(G).\]
\end{thm}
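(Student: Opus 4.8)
The plan is to perturb a maximum‑nullity matrix only at the diagonal positions indexed by a minimum $\Zvc$ zero forcing set. Fix $A_0\in\S(G)$ with $\nul(A_0)=M(G)$, and fix a $\Zvc$ zero forcing set $B$ with $|B|=\Zvc(G)$. For a tuple $t=(t_i)_{i\in B}$ of real parameters, let $A_t$ be the matrix obtained from $A_0$ by adding $t_i$ to the $(i,i)$‑entry for each $i\in B$; clearly $A_t\in\S(G)$. Because the diagonal matrix added to $A_0$ has rank at most $|B|$, we have $\rank(A_t)\le\rank(A_0)+|B|$, hence $\nul(A_t)\ge M(G)-\Zvc(G)$, for \emph{every} $t$. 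So it suffices to show that $A_t$ has the SAP for generic $t$: choosing such a $t$ produces a matrix in $\S(G)$ with the SAP and nullity at least $M(G)-\Zvc(G)$, which gives $\xi(G)\ge M(G)-\Zvc(G)$.

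By Remark~\ref{rem:system}, $A_t$ has the SAP if and only if its SAP matrix $\Psi$ has full column rank, so it is enough to exhibit an $\ol m\times\ol m$ submatrix $\Theta$ of $\Psi$ whose determinant, regarded as a polynomial in $t$, is not identically zero; then $\det\Theta\ne0$ for generic $t$. I would build $\Theta$ by fixing a $\Zvc$ forcing process for $B$ that colors every non-edge blue and reading off one row of $\Psi$ for each non-edge: for a non-edge $\{b,c\}\in\ccl(B)$ with $b\in B$ (fixing a choice of endpoint when both are in $B$) take the row indexed by $(b,c)$; for a non-edge $\{j,k\}$ colored blue by a forcing triple $\ft{k}{i}{j}$ take the row indexed by $(i,k)$; and for the non-edges on an odd cycle $C$ colored blue by an application $(i\to C)$ of the odd cycle rule take the rows $(i,k_s)$ for $k_s\in V(C)$, exactly as in the proof of Theorem~\ref{thm:Zsap}. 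A routine argument about forcing chains (as for ordinary zero forcing) lets one normalize the process so that these rows are pairwise distinct, giving exactly $\ol m$ rows.

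Order the columns of $\Theta$ with the non-edges of $\ccl(B)$ first and the remaining non-edges afterwards in the order they are colored blue (keeping each odd-cycle batch together), and order the rows compatibly. This displays $\Theta$ as a two‑by‑two block matrix whose block $\Theta_{11}$ has rows and columns indexed by $\ccl(B)$, whose block $\Theta_{22}$ has rows and columns indexed by the forced non-edges, and whose blocks $\Theta_{12}$ (rows in $\ccl(B)$, columns forced) and $\Theta_{21}$ (rows forced, columns in $\ccl(B)$) are the off‑diagonal blocks. The role of the $\Zvc$ restriction is precisely this: a forcing triple $\ft{k}{i}{j}$ with $i\in B$ is allowed only when $i\sim k$, so the $(i,k)$‑row of $\Psi$ it contributes never involves the perturbed entry $(A_t)_{i,i}$; since only diagonal entries at $B$ are perturbed, it follows that every entry of $\Theta_{22},\Theta_{21},\Theta_{12}$ is an off-diagonal entry of $A_t$, hence independent of $t$. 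Rerunning Claims~1 and~2 of Theorem~\ref{thm:Zsap} (via Proposition~\ref{prop:forcing} and Lemma~\ref{lem:oc}) shows that in the chosen order $\Theta_{22}$ is block lower triangular with invertible diagonal blocks: each diagonal block is either a nonzero $1\times1$ block (an off-diagonal entry $(A_0)_{i,j}$ with $i\sim j$) or an odd-cycle block of the form in Lemma~\ref{lem:oc}; so $\det\Theta_{22}$ is a nonzero constant. In $\Theta_{11}$, the entry in row $(b,c)$ and column $\{b,c\}$ equals $(A_t)_{b,b}=(A_0)_{b,b}+t_b$, the entry in row $(b,c)$ and column $\{b,c'\}$ is zero whenever $\{b,c\},\{b,c'\}$ are distinct non-edges of $\ccl(B)$ with the same chosen $B$-endpoint $b$, and all other entries of $\Theta_{11}$ are off-diagonal entries of $A_0$, hence $t$-independent. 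Taking the Schur complement, $\det\Theta=\det\Theta_{22}\cdot\det\bigl(\Theta_{11}-\Theta_{12}\Theta_{22}^{-1}\Theta_{21}\bigr)$, and the second factor is the determinant of a matrix all of whose off-diagonal entries are $t$-independent and whose diagonal entries have the form $(\text{constant})+t_b$; expanding it, the monomial $\prod_{i\in B}t_i^{n_i}$, where $n_i$ is the number of non-edges of $\ccl(B)$ with chosen $B$-endpoint $i$, appears with coefficient $1$ and can be produced only by the identity permutation. Hence $\det\Theta\not\equiv0$, so $A_t$ has the SAP for generic $t$, completing the proof.

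The main obstacle is the verification in the previous paragraph. One must check carefully that the $\Zvc$ restriction really does keep every perturbed entry out of the blocks $\Theta_{22},\Theta_{21},\Theta_{12}$ (this is exactly what the restriction is for), that the forcing process can be normalized to give $\ol m$ distinct rows, that $\Theta_{22}$ is genuinely block triangular with the stated diagonal blocks — which requires re-examining the argument of Theorem~\ref{thm:Zsap} together with the interplay between the global forcing order and the internal orders of the local games and the odd-cycle batches — and that the top-degree term $\prod_{i\in B}t_i^{n_i}$ of $\det\Theta$ in the variables $t$ survives with nonzero coefficient. Granting these combinatorial points, the rank inequality and the reduction to the generic SAP statement are immediate.
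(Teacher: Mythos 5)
Your proposal is correct and takes essentially the same route as the paper's proof: perturb the diagonal entries indexed by a minimum ZFS-$\Zvc$, use the forcing process (Claims 1 and 2 of Theorem \ref{thm:Zsap}, Proposition \ref{prop:forcing}, Lemma \ref{lem:oc}) to get a block lower triangular nonsingular submatrix on the forced non-edges, note that the $\Zvc$ restriction confines the perturbed entries to the diagonal of the $\ccl(B)\times\ccl(B)$ block, and conclude that the full square submatrix is generically nonsingular. The paper does exactly this with a single parameter, taking $A+xD_B$ with $x$ large and reading off the leading coefficient in $x$ instead of your multi-parameter Schur-complement argument, which is a purely cosmetic difference.
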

\begin{proof}
For given $G$ and $A=\begin{bmatrix}a_{i,j}\end{bmatrix}\in\S(G)$,  let $d=\Zvc(G)$ and $\ol{m}=|E(\ol{G})|$.  Pick an order for the set of non-edges, and let $\Psi$ be the SAP matrix for $A$ with respect to the given order.   Let $B$ be a ZFS-$\Zvc$ with $|B|=d$.  We will show that we can perturb the diagonal entries of $A$ corresponding to $B$ such that the new matrix has the SAP.

Denote $W=E(\ol{G})-\ccl(B)$ as the initial white non-edges.  Since $B$ is a ZFS-$\Zvc$, every non-edge in $W$ is forced to blue at some stage.  Say at stage $t$, $W_t$ is the set of white non-edges that are forced blue.  The set $W_t$ can be one non-edge, or the edges of an odd cycle; thus, $\{W_t\}_{t=1}^s$ forms a partition of $W$, where $s$ is the number of stages it takes to color all non-edges blue.  Define $U_t$ as follows:  If $W_t$ is a non-edge colored by the forcing triple $\ft{k}{i}{j}$, then $U_t=\{(i,k)\}$; if $W_t$ is a cycle colored by an odd cycle rule $(i\rightarrow C)$, then $U_t=\{(i,v)\}_{v\in V(C)}$.  Let $U=\bigcup_{t=1}^sU_t$.  

We first show that $\Psi[U,W]$ is nonsingular.  The proof of Theorem \ref{thm:Zsap} shows that if $W_{t_0}$ is given by the odd cycle rule for some step $t_0$, then $\Psi[U_{t_0},W_{t_0}]$ is nonsingular and $\Psi[U_{t_0},\bigcup_{t={t_0+1}}^s W_t]=O$.  We will see that the same property is also true when $W_{t_0}$ a single non-edge.  Suppose at stage $t_0$, the set of blue non-edges is $B_E$ and $\ft{k}{i}{j}$ applies.  Thus, $U_{t_0}=\{(i,k)\}$ and $W_{t_0}=\{\{j,k\}\}$.  By Definition \ref{defn:system},
\[\Psi[U_{t_0},W_{t_0}]=\begin{bmatrix}\Psi_{(i,k),\{j,k\}}\end{bmatrix}=\begin{bmatrix}a_{i,j}\end{bmatrix},\]
which is nonsingular, since $\{i,j\}$ is an edge.  For any white non-edge $e$ that is not incident to $k$, $\Psi_{(i,k),e}=0$.  If $e=\{j',k\}$ is a white non-edge for some $j'\neq j$, then $j'$ is not a neighbor of $i$, for otherwise $i$ has two white neighbors in $\lg{G}{B_E}{k}{Z}$;  therefore, $\Psi_{(i,k),e}=a_{i,j'}=0$.  By column/row permutations according to $\{W_t\}_{t=1}^d$ and $\{U_t\}_{t=1}^d$ respectively, the $\Psi[U,W]$ becomes a lower triangular block matrix, with every diagonal block nonsingular.  Hence $\Psi[U,W]$ is nonsingular.

Now give the non-edges in $\ccl(B)$ an order.  Following the order, for each non-edge $\{i,j\}$ in $\ccl(B)$, put either $(i,j)$ or $(j,i)$ into another ordered set $U_B$.  Since $\Psi_{(i,j),\{i,j\}}=a_{i,i}$, the diagonal entries of $\Psi[U_B,\ccl(B)]$ are controlled by $a_{i,i}$ for some $i\in B$.

Consider the matrix 
\[\Psi[U\cup U_B,W\cup \ccl(B)]=\begin{bmatrix}
\Psi[U,W] & \Psi[U,\ccl(B)] \\
\Psi[U_B,W] & \Psi[U_B,\ccl(B)]
\end{bmatrix}.\]
We claim that those entry $a_{i,i}$ with $i\in B$ only appear on the diagonal of $\Psi[U_B,\ccl(B)]$.  For each $i\in B$, the only possible occurrence of $a_{i,i}$ is in the case $\Psi_{(i,k),\{i,k\}}=a_{i,i}$ for some vertex $k$ and non-edge $\{i,k\}\in E(\ol{G})$.  Suppose $i\in B$ and $\{i,k\}\in E(\ol{G})$.  Then $\{i,k\}\in\ccl(B)$.  Therefore, $\Psi[U,W]$ and $\Psi[U_B,W]$ does not have this type of $a_{i,i}$ with $i\in B$ involved.  Now it is enough to show $(i,k)\notin U$.  Recall that $U=\bigcup_{t=1}^sU_t$.  At stage $t$, if a forcing triple is applied, then $(i,k)\notin U_t$ since $\ft{k}{i}{j}$ is forbidden for any $j$ by the definition; if the odd cycle rule is applied, then $(i,k)\notin U_t$ since $\{i,k\}\in E(\ol{G})$.  Therefore, $\Psi[U,\ccl(B)]$ contains no such $a_{i,i}$ with $i\in B$, either.

Let $D_B$ be the diagonal matrix indexed by $V(G)$ with the $i,i$-entry $1$ if $i\in B$ and $0$ otherwise.  Consider the matrix $A+xD_B$.  By the discussion above, the SAP matrix of $A+xD_B$ is
\[\Psi[U\cup U_B,W\cup \ccl(B)]=\begin{bmatrix}
\Psi[U,W] & \Psi[U,\ccl(B)] \\
\Psi[U_B,W] & \Psi[U_B,\ccl(B)]+xI
\end{bmatrix}.\]
Since $\Psi[U,W]$ is nonsingular, $\Psi[U\cup U_B,W\cup \ccl(B)]$ is nonsingular when $x$ is large enough.  This means, by changing $d=|B|$ diagonal entries of $A$, the corresponding SAP matrix becomes full-rank.  Therefore,
\[M(G)-\Zvc(G)\leq \nul(A+xD_B)\leq \xi(G).\]
\end{proof}

\begin{rem}
Theorem \ref{thm:Zvc} actually proves that if $B$ is a ZFS-$\Zvc$, then every matrix $A\in \S(G)$ attains the SAP by perturbing those diagonal entries corresponding to $B$.
\end{rem}

In classical graph theory, a vertex cover of a graph $G$ is a set of vertices $S$ such that every edge in $G$ is incident to some vertex in $S$;  that is, $G-S$ contains no edges.  The \textit{vertex cover number} $\beta(G)$ is defined as the minimum cardinality of a vertex cover in the graph $G$.  Corollary \ref{cor:beta} below shows the relation between $M(G)$, $\xi(G)$, and $\beta(G)$.

\begin{cor}
\label{cor:beta}
Let $G$ be a graph.  Then 
\[M(G)-\beta(\ol{G})\leq \xi(G).\]
\end{cor}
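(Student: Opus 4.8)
The plan is to show that $\Zvc(G)\le\beta(\ol{G})$ and then invoke Theorem \ref{thm:Zvc}. The key observation is that a vertex cover of $\ol{G}$ is automatically a $\Zvc$ zero forcing set, and in fact a trivial one: no forces are needed at all.

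First I would recall that $B\subseteq V(G)$ is a vertex cover of $\ol{G}$ exactly when $\ol{G}-B$ contains no edges, i.e.\ every edge of $\ol{G}$ — equivalently every non-edge of $G$ — is incident to some vertex of $B$. By the definition of the complementary closure, this says precisely that $\ccl(B)=E(\ol{G})$. Hence, if we take $B$ to be a minimum vertex cover of $\ol{G}$ and color $\ccl(B)$ blue as the initial configuration of the $\Zvc$ game, then every non-edge is already blue, so $B$ is (vacuously) a $\Zvc$ zero forcing set. This gives $\Zvc(G)\le|B|=\beta(\ol{G})$.

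Combining with Theorem \ref{thm:Zvc}, which states $M(G)-\Zvc(G)\le\xi(G)$, and using $-\Zvc(G)\ge-\beta(\ol{G})$, we obtain
\[M(G)-\beta(\ol{G})\le M(G)-\Zvc(G)\le\xi(G),\]
as desired.

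I do not expect any genuine obstacle here; the content is entirely in the identification $\ccl(B)=E(\ol{G})$ for a vertex cover $B$, after which the corollary is immediate from Theorem \ref{thm:Zvc}. The only point worth stating carefully is that the restriction in the definition of $\Zvc$ (forbidding certain forcing triples) is irrelevant in this situation, since no forcing step is ever invoked.
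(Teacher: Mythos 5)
Your proposal is correct and matches the paper's own proof: both take a vertex cover $B$ of $\ol{G}$, observe that $\ccl(B)=E(\ol{G})$ so every non-edge is blue initially and $B$ is vacuously a ZFS-$\Zvc$, giving $\Zvc(G)\leq\beta(\ol{G})$, and then conclude via Theorem \ref{thm:Zvc}. No gaps; your explicit remark that the forcing-triple restriction is irrelevant here is a nice touch but not a substantive difference.
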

\begin{proof}
Let $S$ be a vertex cover of $\ol{G}$.  Then $S$ is a ZFS-$\Zvc$, since every non-edge is blue initially.  Therefore, $\Zvc(G)\leq \beta(G)$ and the desired inequality comes from Theorem \ref{thm:Zvc}.
\end{proof}

\begin{ex}
Let $G=K_3\vee \ol{K_4}$.  Then from data in \cite{small}, $M(G)=Z(G)=5$.  Since $G$ is a subgraph of $K_3\vee P_4$, by minor monotonicity $\xi(G)\leq \xi(K_3\vee P_4)\leq Z(K_3\vee\ol{P_4})\leq 4$.  On the other hand, by picking one of the vertex in $V(K_4)$, it forms a ZFS-$\Zvc$, since the initial white non-edges form a 3-cycle and the odd cycle rule applies.  Thus $\Zvc(G)=1$ and $\xi(G)\geq M(G)-\Zvc(G)=4$.  Therefore, $\xi(G)=4$.

Notice that $G$ contains a $K_4$ minor but not a $K_5$ minor, so we can only say $\xi(G)\geq \xi(K_4)=3$ by considering $K_p$ minors.
\end{ex}

Similarly, we can define $\Zvc^\ell(G)$ by changing CCR-$\Zsap$ to CCR-$\Zsap^\ell$.  Then we have Theorem \ref{thm:Zvcl}.

\begin{thm}
\label{thm:Zvcl}
Let $G$ be a graph.  Then
\[M_+(G)-\Zvc^\ell(G)\leq \nu(G).\]
\end{thm}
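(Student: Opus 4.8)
The plan is to mimic the proof of Theorem \ref{thm:Zvc} verbatim, replacing the use of CCR-$\Zsap$ by CCR-$\Zsap^\ell$, the class $\S(G)$ by $\S_+(G)$, and the bound $M(G)$ by $M_+(G)$; the only substantive new ingredient is to verify that the perturbation $A\mapsto A+xD_B$ keeps the matrix positive semidefinite in the relevant sense. So first I would fix $A=[a_{i,j}]\in\S_+(G)$ with $\nul(A)=M_+(G)$, choose an order on the non-edges, form the SAP matrix $\Psi$, and take $B$ a ZFS-$\Zvc^\ell$ with $|B|=\Zvc^\ell(G)=:d$. Setting $W=E(\ol G)-\ccl(B)$, I partition $W=\bigcup_{t=1}^s W_t$ according to the stages of the forcing process and define $U_t$ and $U=\bigcup_t U_t$ exactly as before (a single row $(i,k)$ for a forcing triple $\ft{k}{i}{j}$, the rows $\{(i,v)\}_{v\in V(C)}$ for an odd cycle rule $(i\to C)$).

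Next I would re-run the two block-triangularity arguments. For the odd-cycle steps, nothing changes: Lemma \ref{lem:oc} and the row structure of $\Psi$ give that $\Psi[U_{t_0},W_{t_0}]$ is nonsingular and $\Psi[U_{t_0},\bigcup_{t>t_0}W_t]=O$, since $A\in\S_+(G)$ in particular lies in $\S(G)$. For the single-non-edge steps, the one point that needs care is the claim ``if $e=\{j',k\}$ is a white non-edge with $j'\neq j$ then $a_{i,j'}=0$'': in the original proof this used that $i$ cannot have two white neighbors in the CCR-$Z$ local game. Under CCR-$\Zsap^\ell$ the force $\ft{k}{i}{j}$ comes from a force $i\to j$ in $\lg{G}{B_E}{k}{Z_\ell}$, which is governed by Proposition \ref{prop:forcing} (the $Z_\ell$ bullet) and the same ``unique white neighbor'' structure when the force is a CCR-$Z$ force; when the force is the second CCR-$Z_\ell$ option (a white non-isolated vertex with no white neighbor turning blue) we would have $i=j$ and $a_{i,j}=a_{j,j}$, which is nonzero precisely because $j$ is non-isolated and $A\in\S_+(G)$ has $a_{j,j}\neq 0$, while every other white non-edge at $k$ has its $j'$-endpoint already blue, giving $\Psi_{(i,k),e}=0$ again. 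Either way $\Psi[U_{t_0},W_{t_0}]$ is a nonzero scalar and $\Psi[U_{t_0},\bigcup_{t>t_0}W_t]=O$, so after reordering rows and columns $\Psi[U,W]$ is block lower triangular with nonsingular diagonal blocks, hence nonsingular.

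Then, exactly as in Theorem \ref{thm:Zvc}, I introduce the ordered set $U_B$ picking one of $(i,j),(j,i)$ for each non-edge in $\ccl(B)$, observe that the restriction ``$\ft{k}{i}{j}$ forbidden when $i\in B$ and $\{i,k\}\in E(\ol G)$'' (inherited by CCR-$\Zsap^\ell$ in the definition of $\Zvc^\ell$) together with ``$\{i,k\}\in E(\ol G)$ during an odd cycle step'' forces $(i,k)\notin U$ for $i\in B$, so the variables $a_{i,i}$ with $i\in B$ occur in $\Psi[U\cup U_B,W\cup\ccl(B)]$ only on the diagonal of the $\Psi[U_B,\ccl(B)]$ block. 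Hence the SAP matrix of $A+xD_B$ is
\[\begin{bmatrix}\Psi[U,W] & \Psi[U,\ccl(B)]\\ \Psi[U_B,W] & \Psi[U_B,\ccl(B)]+xI\end{bmatrix},\]
which, since $\Psi[U,W]$ is nonsingular, is nonsingular for all sufficiently large $x$; therefore $A+xD_B$ has the SAP. The main obstacle — and the only genuinely new step beyond transcribing the earlier argument — is the positive semidefiniteness: $D_B\succeq O$ so $A+xD_B\succeq O$ for $x\ge 0$, hence $A+xD_B\in\S_+(G)$, and since adding the PSD matrix $xD_B$ cannot raise the rank by more than $d=|B|$ we get $\rank(A+xD_B)\le \rank(A)+d$, i.e. $\nul(A+xD_B)\ge \nul(A)-d=M_+(G)-\Zvc^\ell(G)$. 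Combining with the SAP property just proved yields $M_+(G)-\Zvc^\ell(G)\le \nu(G)$. I would close by remarking, as in the $\Zvc$ case, that in fact every $A\in\S_+(G)$ attains the SAP after this nonnegative diagonal perturbation supported on a ZFS-$\Zvc^\ell$.
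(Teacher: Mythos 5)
Your proposal is correct and is exactly the argument the paper intends: Theorem \ref{thm:Zvcl} is stated there without a separate proof, as the $\S_+(G)$/CCR-$\Zsap^\ell$ analogue of Theorem \ref{thm:Zvc}, and your two added observations (the self-force case $i=j$ yields a nonzero pivot $a_{j,j}$ because a positive semidefinite matrix has nonzero diagonal at every non-isolated vertex, and $A+xD_B$ stays positive semidefinite with rank increasing by at most $|B|$, so the nullity drops by at most $\Zvc^\ell(G)$) are precisely the additions needed to transcribe that proof. One phrase should be corrected: in the self-force case the endpoint $j'$ of any other white non-edge $\{j',k\}$ is white in the local game, not ``already blue,'' and it is the fact that $j$ has no white neighbors that forces $j'\notin N_G(j)$ and hence $\Psi_{(j,k),\{j',k\}}=a_{j,j'}=0$.
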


\begin{rem}
The proof of Theorem \ref{thm:Zvc} relies on the fact $\Psi[U,W]$ is a lower triangular block matrix.  This is not always true for $Z_+$.  As a vertex can force two or more white vertices under CCR-$Z_+$, the sets $\{U_t\}_{t=1}^s$ might not be mutually disjoint and it is possible that $|U|< |W|$.  Therefore, the same proof does not work for $Z_+$.
\end{rem}

\section{Values of $\xi(G)$ for small graphs}
\label{sec:xi}

Analogous to $M(G)\leq Z(G)$, it is shown in \cite{param} that $\xi(G)\leq \ZFloor(G)$, where $\ZFloor(G)$ is defined through a (conventional) zero forcing game with CCR-$\ZFloor$:
\begin{itemize}
\item CCR-$Z$ can be used to perform a force.  Or if $i$ is blue, $i$ has no white neighbors, and $i$ was not used to make a force yet, then $i$ can pick one white vertex $j$ and force it blue.
\end{itemize}

By using \textit{Sage} and with the help of Theorem \ref{thm:Zsap} and Theorem \ref{thm:Zvc}, we will see that $\ZFloor$ agrees with $\xi(G)$ for graphs up to $7$ vertices.  This result also relies on some other lower bounds.  The \textit{Hadwiger number} $\eta(G)$ is defined as the largest $p$ such that $G$ has a $K_p$ minor.  Since $\xi(G)$ is minor monotone, it is known \cite{param} that when $\eta(G)=p$
\[\xi(G)\geq \xi(K_p)=p-1=\eta(G)-1.\]
The $T_3$-family is a family of $6$ graphs \cite[Fig.~2.1]{HvdH06}.  It is known \cite{HvdH06} that a graph $G$ contains a minor in the $T_3$-family if and only if $\xi(G)\geq 3$.

\begin{lem}
\label{lem:xivalue}
Let $G$ be a connected graph with at most $7$ vertices.  Then at least one of the following is true:
\begin{itemize}
\item $\Zsap(G)=0$, which implies $\xi(G)=M(G)$.
\item $G$ is a tree, which implies $\xi(G)=2$ if $G$ is not a path, and $\xi(G)=1$ otherwise.
\item $\ZFloor(G)=M(G)-\Zvc(G)$, which implies $\xi(G)=\ZFloor(G)$.
\item $\ZFloor(G)=\eta(G)-1$, which implies $\xi(G)=\ZFloor(G)$.
\item $\ZFloor(G)=3$ and $G$ contains a $T_3$-family minor, which implies $\xi(G)=3$.
\end{itemize}
\end{lem}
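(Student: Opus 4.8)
The plan is to dispose of the five parenthetical implications using results already in hand, and then to verify by an exhaustive computer search that every connected graph on at most $7$ vertices satisfies at least one of the five hypotheses. The implications are immediate. If $\Zsap(G)=0$, Theorem \ref{thm:Zsap} says every $A\in\S(G)$ has the SAP, so $\xi(G)=M(G)$. If $G$ is a tree, the value of $\xi$ is the classical fact (recorded in \cite{param}) that $\xi(T)=1$ exactly when $T$ is a path and $\xi(T)=2$ otherwise. For the last three cases one sandwiches $\xi(G)$ between a lower bound and the upper bound $\xi(G)\le\ZFloor(G)$ of \cite{param}: in case 3, Theorem \ref{thm:Zvc} gives $M(G)-\Zvc(G)\le\xi(G)\le\ZFloor(G)$, so $\ZFloor(G)=M(G)-\Zvc(G)$ forces equality throughout; in case 4, minor monotonicity gives $\eta(G)-1=\xi(K_{\eta(G)})\le\xi(G)\le\ZFloor(G)$, so $\ZFloor(G)=\eta(G)-1$ forces $\xi(G)=\ZFloor(G)$; in case 5, the $T_3$-family characterization of \cite{HvdH06} gives $\xi(G)\ge 3$ whenever $G$ has a $T_3$-family minor, so $3\le\xi(G)\le\ZFloor(G)=3$.

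With the implications settled, it remains to show that the disjunction of hypotheses is exhaustive over connected graphs with $|V(G)|\le 7$. I would enumerate all such graphs up to isomorphism (fewer than $10^3$ of them), and for each one that is neither a tree nor has $\Zsap(G)=0$, compute $\ZFloor(G)$, $M(G)$, $\Zvc(G)$, and $\eta(G)$, and test for a minor in the $T_3$-family, checking that condition 3, 4, or 5 holds. Each of these quantities is effectively computable for graphs this small: $\Zsap$, $\Zvc$, and $\ZFloor$ are finite searches directly from their defining color-change rules; $\eta(G)$ and the $T_3$-minor test are finite minor searches; and $M(G)=Z(G)$ for $|G|\le 7$ by \cite{AIM,small}, so $M(G)$ is available. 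This is the content of the \textit{Sage} computation referenced in Section \ref{sec:simulation}.

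The main obstacle is the exhaustiveness itself rather than any individual step: there is a priori no structural reason why cases 3--5 should cover every non-tree graph with $\Zsap(G)>0$, so the lemma genuinely rests on the verified enumeration, and the delicate point is being confident that the enumeration is complete and that the implementations of $\Zvc$ and of the odd-cycle rule inside it faithfully realize the definitions. A secondary concern is that the argument relies on $M(G)=Z(G)$ for $|G|\le 7$, so the lemma is not self-contained. As partial structural cover one can observe, for instance, that Corollary \ref{cor:forest} puts every $G$ with $\ol{G}$ a forest into case 1, and that graphs with a $K_p$ minor for $p$ near $|V(G)|$ tend to land in case 4; but such observations only trim the search and do not replace the exhaustive check.
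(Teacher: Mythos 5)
Your proposal matches the paper's proof: the five implications are dispatched exactly as in the paper (Theorem \ref{thm:Zsap} for case 1, the known tree values of $\xi$ for case 2, and sandwiching $\xi(G)$ between the lower bounds $M(G)-\Zvc(G)$, $\eta(G)-1$, or $3$ from a $T_3$-family minor and the upper bound $\ZFloor(G)$ for cases 3--5), while exhaustiveness over connected graphs on at most $7$ vertices is delegated to the \textit{Sage} computation of \cite{SageZsap}, which is precisely what the paper does. Your caveat about relying on $M(G)=Z(G)$ for small graphs and on the correctness of the enumeration is fair but does not constitute a gap relative to the paper's own argument.
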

\begin{proof}
By running a \textit{Sage} program \cite{SageZsap}, one of the five cases will happen.  If $\Zsap(G)=0$, then $\xi(G)=M(G)$ by Theorem \ref{thm:Zsap}.  If $G$ is a tree, then $\xi(G)\leq 2$, and the equality holds only when $G$ is not a path \cite{xi}.  Both $M(G)-\Zvc(G)$ and $\eta(G)-1$ are lower bounds of $\xi(G)$ by Theorem \ref{thm:Zvc} and \cite{param}.  When one of the lower bounds meets with the upper bound $\ZFloor(G)$, $\xi(G)=\ZFloor(G)$.  Finally, if $G$ has a $T_3$-family minor, then $\xi(G)\geq 3$ \cite{HvdH06}.  In this case, $\xi(G)=3$ when $\ZFloor(G)=3$.
\end{proof}

While $\xi(T)\leq 2$ for all tree $T$, the value of $\ZFloor(T)$ can be more than two.  Example A.11.~of \cite{param} gives a tree $T$ with $\ZFloor(T)= 3$; the graph $T$ is shown in Figure \ref{fig:tree3}.  However, $\xi(G)=\ZFloor(G)$ is still true when $G$ is a tree and $|G|\leq 7$.

\begin{figure}[h]
\begin{center}\begin{tikzpicture}
\pgfmathsetmacro{\l}{1}
\begin{scope}[every node/.style={circle,draw=black,fill=black!10}]
\node (v0) at (0,0) {};
\foreach \i/\ang in {1/90,2/210,3/330}{
\coordinate (p\i) at (\ang:\l);
\node (v\i) at (p\i) {}; 
\draw (v0) -- (v\i);
\foreach \j in {1,2,3}{
\pgfmathsetmacro{\k}{int(3*\i+\j)}
\pgfmathsetmacro{\kang}{120*\i+60*\j-150}
\path (p\i)++(\kang:\l) coordinate (p\k);
\node (v\k) at (p\k) {};
\draw (v\i) -- (v\k);
}}
\end{scope}

\end{tikzpicture}\end{center}
\caption{An example of tree $T$ with $\ZFloor(T)>3$.}
\label{fig:tree3}
\end{figure}

\begin{lem}
\label{lem:ZFtree}
Let $G$ be a tree with at most $7$ vertices.  Then $\xi(G)=\ZFloor(G)$.  
\end{lem}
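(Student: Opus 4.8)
The plan is to prove the statement by a finite case analysis over all trees on at most $7$ vertices, leveraging the fact that $\xi(T)\le 2$ for every tree (with equality iff $T$ is not a path, by \cite{xi}). Since $\ZFloor$ is an upper bound for $\xi$ and $\ZFloor(T)\ge 1$ for any graph with an edge, the only thing that can go wrong is a tree $T$ with $\xi(T)=1$ but $\ZFloor(T)\ge 2$, or a tree $T$ with $\xi(T)=2$ but $\ZFloor(T)\ge 3$. So the proof splits into two claims: first, for paths $P_n$ with $n\le 7$ we must check $\ZFloor(P_n)=1$; second, for non-path trees $T$ with $|T|\le 7$ we must check $\ZFloor(T)\le 2$, which combined with $\ZFloor(T)\ge\xi(T)=2$ gives equality.

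For the first claim, a single blue endpoint of $P_n$ forces the entire path under the ordinary CCR-$Z$ rule (each blue vertex has exactly one white neighbor as we march down the path), so $Z(P_n)=1$ and hence $\ZFloor(P_n)\le Z(P_n)=1$; since $P_n$ has an edge, $\ZFloor(P_n)=1$. (This does not even need the enhanced CCR-$\ZFloor$ rule.) For the second claim, I would enumerate the non-path trees on at most $7$ vertices — there are only a handful: the stars, the spiders, the "double brooms," etc. — and for each exhibit a $\ZFloor$-forcing set of size $2$. The natural strategy: a non-path tree has a vertex $v$ of degree $\ge 3$; color $v$ blue together with one further well-chosen vertex (e.g. a leaf, or the center of the tree). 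Using ordinary forces down the "thin" branches and the extra CCR-$\ZFloor$ move once a branch of $v$ is exhausted, one can route the forcing so that all vertices turn blue. Rather than case-splitting by hand, the cleanest write-up simply invokes the \textit{Sage} computation already referenced in \cite{SageZsap}, exactly as Lemma \ref{lem:xivalue} does: the program checks that $\ZFloor(T)\le 2$ for every non-path tree on at most $7$ vertices.

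Concretely, the proof reads: if $G$ is a path then $\xi(G)=1=\ZFloor(G)$ as above; if $G$ is a non-path tree, then $\xi(G)=2$ by \cite{xi}, and $\ZFloor(G)\ge\xi(G)=2$ by \cite{param}, while a \textit{Sage} check \cite{SageZsap} (or a direct argument exhibiting a size-$2$ $\ZFloor$-set built around a degree-$\ge 3$ vertex) shows $\ZFloor(G)\le 2$; hence $\xi(G)=\ZFloor(G)=2$.

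The main obstacle — if one wants to avoid the computer check and give the human argument in full — is verifying $\ZFloor(T)\le 2$ for all non-path trees on $7$ vertices while handling the interaction between ordinary forces and the one-time "stalled vertex" move of CCR-$\ZFloor$; the subtlety is that the extra move can be used at most once per vertex and only after that vertex has no white neighbors, so the forcing order must be chosen carefully (for instance, process the longest branch last so that the degree-$\ge 3$ center can spend its extra move on whichever branch remains). Since the tree count here is tiny and \cite{param} already establishes the general inequality $\xi\le\ZFloor$, deferring this finite verification to the cited \textit{Sage} code is both rigorous and in keeping with the treatment of Lemma \ref{lem:xivalue}.
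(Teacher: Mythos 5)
Your reduction is the same as the paper's: handle paths directly ($\ZFloor(P_n)=1$), and for non-path trees use $\xi(G)=2$ from \cite{xi} together with $\xi\leq\ZFloor$ from \cite{param}, so that everything hinges on showing $\ZFloor(T)\leq 2$ for non-path trees on at most $7$ vertices. Where you diverge is in how that last step is settled: you defer it to a finite computer verification (or leave the hand argument as a sketch about seeding the degree-$\geq 3$ vertex plus one more vertex and routing the forces carefully), whereas the paper closes it with a short structural argument that avoids both the enumeration and the delicate routing you worry about. The paper's observation is that a tree on at most $7$ vertices has at most two vertices of degree at least $3$ (three such vertices would force the degree sum above $2(n-1)\leq 12$); if there is only one, any two leaves form a ZFS-$\ZFloor$, and if there are two, one picks two leaves whose connecting path passes through only one high-degree vertex. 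Choosing two leaves, rather than the high-degree vertex itself, is what makes the bookkeeping with the extra CCR-$\ZFloor$ move painless: a leaf whose forcing chain gets absorbed into already-blue territory becomes a stalled, unused forcer exactly when a new branch needs to be jump-started. Your computational fallback is legitimate in spirit (Lemma~\ref{lem:xivalue} itself leans on \textit{Sage}), but note that you are proposing a check rather than citing one known to be in \cite{SageZsap}, and if you want a self-contained proof you would still need to either run that enumeration or flesh out the two-blue-vertex argument; the paper's degree-counting trick is the cleaner way to do the latter.
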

\begin{proof}
When $G$ is a tree, it is known \cite{xi} that $\xi(G)=2$ when $G$ is not a path, and $\xi(G)=1$ if $G$ is a path.  When $G$ is a path, then $\xi(G)=1=\ZFloor(G)$.  Assume $G$ is not a path.  It is enough to show $\ZFloor(G)\leq 2$.  In this case, $G$ must have a vertex $v$ of degree at least $3$.  Call this type of vertex a high-degree vertex.  If $G$ has only one high degree vertex, then $\ZFloor(G)\leq 2$ since any two leaves form a ZFS-$\ZFloor$.  Since $|G|\leq 7$, there are at most two high-degree vertices.  Pick two leaves such that the unique path between them contains only one high-degree vertex, then these two leaves form a ZFS-$\ZFloor$.
\end{proof}

\begin{thm}
\label{thm:xivalue}
Let $G$ be a graph with at most $7$ vertices.  Then $\xi(G)=\ZFloor(G)$.
\end{thm}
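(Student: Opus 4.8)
The plan is to read the theorem directly off Lemma~\ref{lem:xivalue} and Lemma~\ref{lem:ZFtree} after reducing to the connected case. Assume first that $G$ is connected, and apply Lemma~\ref{lem:xivalue}. In its third and fourth cases the equality $\xi(G)=\ZFloor(G)$ is already recorded; in the fifth case the lemma gives $\xi(G)=3$ while the case hypothesis includes $\ZFloor(G)=3$, so $\xi(G)=\ZFloor(G)$ there as well; and in the second case $G$ is a tree, so Lemma~\ref{lem:ZFtree} supplies $\xi(G)=\ZFloor(G)$. The only case that needs an extra line is the first, $\Zsap(G)=0$: there $\xi(G)=M(G)$ by Theorem~\ref{thm:Zsap}, and $M(G)=Z(G)$ since $|G|\le 7$ \cite{AIM,small}, so combined with the general bounds $\xi(G)\le\ZFloor(G)$ \cite{param} and $\ZFloor(G)\le Z(G)$ (immediate from the $\ZFloor$-game) we get
\[\xi(G)\le\ZFloor(G)\le Z(G)=M(G)=\xi(G),\]
hence equality throughout. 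This settles every connected $G$ with $|G|\le 7$.

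Next I would handle a disconnected $G$, with components $G_1,\dots,G_k$, each connected with at most seven vertices. This rests on two componentwise identities: $\xi(G)=\max_i\xi(G_i)$ and $\ZFloor(G)=\max_i\ZFloor(G_i)$. For the first, any $A\in\S(G)$ is block diagonal, $A=A_1\oplus\cdots\oplus A_k$ with $A_i\in\S(G_i)$, and a check of the three SAP conditions shows that $A$ has the SAP exactly when every $A_i$ has the SAP and at most one $A_i$ is singular; realizing $\xi(G_i)$ in one block and taking the remaining blocks nonsingular then gives $\xi(G)=\max_i\xi(G_i)$. For the second, minor monotonicity of $\ZFloor$ gives $\max_i\ZFloor(G_i)\le\ZFloor(G)$, while the reverse follows by playing a completed $\ZFloor$-game on the component of largest $\ZFloor$-value and then using the blue vertices that performed no force to initiate the games on the other components. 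Applying the connected case to each $G_i$ now yields $\xi(G)=\max_i\xi(G_i)=\max_i\ZFloor(G_i)=\ZFloor(G)$, as desired.

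I do not expect a serious obstacle: given Lemma~\ref{lem:xivalue} (which itself rests on a \textit{Sage} verification) and Lemma~\ref{lem:ZFtree}, the argument is essentially an assembly of the bound $\xi(G)\le\ZFloor(G)$ with matching lower bounds and, in the $\Zsap(G)=0$ case, with $M(G)=Z(G)$. The one point needing care is the disconnected case: since $\Zsap$ behaves badly under disjoint unions, as Example~\ref{ex:highZsap} illustrates, one must work componentwise with $\xi$ and $\ZFloor$ directly rather than with the forcing machinery of Section~\ref{sec:Zsap}.
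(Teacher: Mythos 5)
Your proposal is correct and follows essentially the same route as the paper: apply Lemma~\ref{lem:xivalue} (using $M(G)=Z(G)$ for $|G|\le 7$ together with $\xi(G)\le\ZFloor(G)\le Z(G)$ in the $\Zsap(G)=0$ case) and Lemma~\ref{lem:ZFtree} for connected graphs, then reduce the disconnected case via $\xi(G_1\dunion G_2)=\max\{\xi(G_1),\xi(G_2)\}$ and $\ZFloor(G_1\dunion G_2)=\max\{\ZFloor(G_1),\ZFloor(G_2)\}$. The only difference is that you sketch proofs of these two disjoint-union identities (correctly), whereas the paper simply cites them from \cite{xi} and \cite{param}.
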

\begin{proof}
Let $G$ be a graph with at most $7$ vertices.  Then $M(G)=Z(G)$ \cite{small}.  If $\Zsap(G)=0$, then $\xi(G)=M(G)=Z(G)$.  Since $\xi(G)\leq \ZFloor(G)\leq Z(G)$, $\xi(G)=\ZFloor(G)$.  If $G$ is a tree, then $\ZFloor(G)=\xi(G)$ by Lemma \ref{lem:ZFtree}.  Then by Lemma \ref{lem:xivalue}, $\xi(G)=\ZFloor(G)$ for all connected graph $G$ up to $7$ vertices.  It is known that $\xi(G_1\dunion G_2)=\max\{\xi(G_1),\xi(G_2)\}$ \cite{xi} and $\ZFloor(G_1\dunion G_2)=\max\{\ZFloor(G_1),\ZFloor(G_2)\}$ \cite{param}, so $\xi(G)=\ZFloor(G)$ for any graph up to $7$ vertices.
\end{proof}

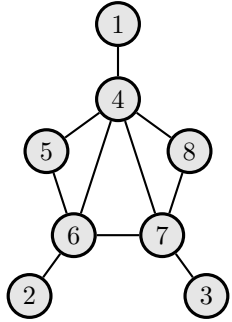
\begin{figure}[h]
\begin{center}\begin{tikzpicture}
\foreach \x/\y in {1/90,2/234,3/306}{
\node[whitenode] (v\x) at (\y:2) {$\x$};
}
\foreach \x/\y in {4/90,5/162,6/234,7/306,8/18}{
\node[whitenode] (v\x) at (\y:1) {$\x$};
}
\draw (v1)--(v4);
\draw (v2)--(v6);
\draw (v3)--(v7);
\draw (v4)--(v5)--(v6)--(v7)--(v8)--(v4);
\draw (v4)--(v6);
\draw (v4)--(v7);
\end{tikzpicture}\end{center}
\caption{A graph $G$ on 8 vertices with $\xi(G)=2$ but $\ZFloor(G)=3$.}
\label{fig:xiZFloor}
\end{figure}

\begin{ex}
Let $G$ be the graph shown in Figure \ref{fig:xiZFloor}.  It is known \cite{JLS} that $M(G)=2$.  Since $G$ is not a disjoint union of paths, $\xi(G)=2$.  Also, it can be computed that $Z(G)=\ZFloor(G)=3$.
\end{ex}

\section{Acknowledgments}
The author thanks Leslie Hogben and Steve Butler for their suggestions.

\bibliography{./JLaTeX/AuthorA,./JLaTeX/JournalA,./JLaTeX/JepBib}{}
\bibliographystyle{plain}

%
\end{document}